\UseRawInputEncoding
\documentclass[leqno,a4paper,10pt]{amsart}
\usepackage{amsmath,amsthm,amssymb,amsfonts,enumerate,color}
\allowdisplaybreaks[4]

\oddsidemargin = 9pt \evensidemargin = 9pt \textwidth = 440pt





\newtheorem{thm}{Theorem}[section]
\newtheorem{prop}[thm]{Proposition}

\newtheorem{lem}[thm]{Lemma}
\newtheorem{defi}[thm]{Definition}
\newtheorem{remark}[thm]{Remark}
\newtheorem{example}[thm]{Example}
\newtheorem{pb}[thm]{Problem}

\newtheorem{theoA}{Theorem}

\newtheorem{remA}[theoA]{Remark}

\numberwithin{equation}{section}
\newcommand{\R}{{\mathbb R}}

\newcommand{\real}{{\mathbb R}}

\newcommand{\ent}{{\mathbb Z}}

\renewcommand{\L}{{\mathcal L}}

\newcommand{\W}{{\mathcal W}}
\newcommand{\norm}[1]{\left\Vert#1\right\Vert}
\newcommand{\abs}[1]{\left\vert#1\right\vert}

\newcommand{\M}{{\mathcal M}}
\renewcommand{\P}{{\mathcal P}}

\begin{document}

\title[Differential transforms for Poisson semigroups]
{Boundedness of differential transforms for Poisson semigroups generated by Bessel operators}

\thanks{{\it 2010 Mathematics Subject Classification:} 42B20, 42B25.}
\thanks{{\it Key words:} differential transforms, Bessel operator,  Poisson semigroup, lacunary sequence.}

\author{Chao Zhang }

 \address{School of Statistics and Mathematics \\
             Zhejiang Gongshang University \\
             Hangzhou 310018, P.R. China}
 \email{zaoyangzhangchao@163.com}

\thanks{The  author was supported by the Zhejiang Provincial Natural Science Foundation of China(Grant No. LY22A010011), the National Natural Science Foundation of China(Grant No. 11971431) and the Zhejiang Provincial Social Science Foundation of China(Grant No. 22NDJC094YB)}

\date{}
\maketitle

\begin{abstract}
 In this paper we analyze the convergence of the following type of series
\begin{equation*}  T_N f(x)=\sum_{j=N_1}^{N_2} v_j\Big(\P_{a_{j+1}} f(x)-\P_{a_{j}} f(x)\Big),\quad x\in \mathbb R_+,
\end{equation*}
where  $\{\P_{t} \}_{t>0}$ is the Poisson semigroup of the Bessel operator  $\displaystyle \Delta_\lambda:=-{d^2\over dx^2}-{2\lambda\over x}{d\over dx}$ with $\lambda$ being a positive constant, $N=(N_1, N_2)\in \mathbb Z^2$ with $N_1<N_2,$ $\{v_j\}_{j\in \mathbb Z}$ is a bounded real sequences  and $\{a_j\}_{j\in \mathbb Z}$ is an increasing real sequence.
{Our analysis will consist in the boundedness, in  $L^p(\mathbb{R}_+)$ and in  $BMO(\mathbb{R}_+)$,  of the operators
$T_N$ and its maximal operator $\displaystyle T^*f(x)= \sup_N \abs{T_N f(x)}.$}
It is also shown that the local size of the maximal differential transform operators  is the same with the order  of a singular integral for functions $f$ having local support.
 \end{abstract}


\vskip 1cm

\section{Introduction} \label{Sec:L2}

Let $\lambda$ be a positive constant and $\Delta_\lambda$ be the Bessel operator which is defined by setting, for all suitable functions $f$ on $\mathbb R_+:=(0, \infty)$,
$$\Delta_\lambda f(x):=-{d^2\over dx^2}f(x)-{2\lambda\over x}{d\over dx}f(x).$$
An early work concerning the Bessel operator goes back to Muckenhoupt and Stein \cite{MS}. They developed a theory associated to which is parallel to the classical one associated to the Laplace operator. Since then, a lot of work concerning the Bessel operators was carried out; see, for example \cite{AK, BCFR, BFBMT, BFS, BHNV, DLMWY, DLWY1, DLWY2, MWY, Villani, WYZ, YY}.   In particular, Betancor et al. in \cite{BDT}
 established the characterizations of the atomic Hardy space $H^1((0, \infty), dm_\lambda)$ associated to $\Delta_\lambda$ in terms of the Riesz transform and the radial maximal function related to a class of functions including the Poisson semigroup $\{\P_t\}_{t>0}$ and the heat semigroup $\{\W_t\}_{t>0}$ as special cases, where $dm_\lambda:=x^{2\lambda}dx$ and $dx$ is the Lebesgue measure. The Poisson semigroup operators are defined by
 $$\P_t f(x):=e^{-t\sqrt {\Delta_\lambda}}f(x)=\int_0^\infty\P_t(x,y)f(y)dm_\lambda(y),$$
where $J_v$ is the Bessel function of the first kind of order $v$ with $v\in (-{1\over 2}, \infty)$ and
\begin{align*}\P_t(x,y)&=\int_0^\infty e^{-tz}(xz)^{-\lambda+1/2}J_{\lambda-1/2}(xz)(yz)^{-\lambda+1/2}J_{\lambda-1/2}(yz)dm_\lambda(z)\\
&={2\lambda t\over \pi}\int_0^\pi {(\sin \theta)^{2\lambda-1}\over (x^2+y^2+t^2-2xy\cos \theta)^{\lambda+1}}d\theta, \quad t,x, y\in \mathbb R_+.
\end{align*}

For good enough functions $f$, the families
$\{\P_t f\}_{t>0}, \{\W_t f\}_{t>0} $ converge to $f$  when $t \rightarrow 0^+$. The type of convergence  (pointwise, norm, measure, $\dots$) depends on the class of functions $f$ in consideration.
One way to enter more deeply into this kind  of convergence is to analyze the behavior of the following type sums
\begin{equation}\label{Formu:SquareFun}
 \sum_{j\in \ent} v_j(\P_{a_{j+1}} f(x)-\P_{a_{j}} f(x))  \quad \hbox{and} \quad \sum_{j\in \ent} v_j(\W_{a_{j+1}} f(x)-\W_{a_{j}} f(x)),
\end{equation}
where $\{v_j\}_{j\in \ent}$ is a bounded sequence of real numbers and  $\{a_j\}_{j\in \ent}$ is  an increasing  sequence of positive numbers. Observe that in the case $v_j\equiv1$, the above series is  telescopy, and their behavior coincide with the one of $\P_t f(x)$ and $\W_t f(x)$. This way of analyzing convergence of sequences was considered by Jones and Rosemblatt for ergodic averages(see \cite{JR}), and latter by Bernardis et al. for differential transforms(see \cite{BLMMDT}).

To better understand the behavior of the sums in (\ref{Formu:SquareFun}), we shall analyze its ``partial sums'' defined as follows(we only consider the case for $\P_t$ in this paper).
For each $N\in \ent^2,~N=(N_1,N_2)$ with $N_1<N_2,$ we define
\begin{equation}\label{Formu:FinSquareFun}
 T_N f(x)=\sum_{j=N_1}^{N_2} v_j(\P_{a_{j+1} } f(x)-\P_{a_{j}} f(x)),\ x\in \mathbb R_+.
\end{equation}
 We shall also consider the   maximal operators
\begin{equation}\label{Formu:MaxSquareFun}
 T^*f(x)=\sup_N \abs{T_N f(x)}, \quad x\in\real_+,
\end{equation}
where the supremum are taken over all $N=(N_1,N_2)\in \ent^2$ with $N_1< N_2$.
In \cite{ZMT, ZT}, the authors proved the boundedness of the above operators related with the one-sided fractional Poisson type operator sequence and the classical laplacian separately.

Some of our results will be valid only when the sequence  $\{a_j\}_{j\in \mathbb Z}$ is lacunary.  It means that  there exists a $\rho>1$ such that $\displaystyle \frac{a_{j+1}}{a_j} \ge \rho, \, j \in \mathbb{Z}$.  In particular, we shall prove  the boundedness of the operators $T^*$  in the weighted spaces
$L^p(\mathbb R_+, \omega dm_\lambda)$ and $BMO(\mathbb R_+, dm_\lambda)$ space, where $\omega$ is  the usual Muckenhoupt weight on $\mathbb R_+$. We refer the reader to the book by J. Duoandikoetxea \cite[Chapter 7]{Duo} for definitions and properties of the $A_p$ classes. We say that, a function   $f\in L_{loc}^1(\real_+, dm_\lambda) $  belongs to the space $BMO(\real_+, dm_\lambda)$ if
$$\norm{f}_{BMO(\R_+, dm_\lambda)}  := \sup_{x,r \in (0, \infty)} \Big\{ \frac1{m_\lambda(I(x,r))} \int_{I(x,r)}\Big|f(y)- f_{I(x,r)}\Big|\  dm_\lambda(y)  \Big\}<\infty,$$
where $$f_{I(x,r)}:={1\over m_\lambda(I(x,r))}\int_{I(x,r)}f(y)\  dm_\lambda(y). $$
And we have the following results.
\begin{thm}\label{Thm:PoissonLp}    Assume that the sequence $\{a_j\}_{j\in \mathbb Z}$ is a $\rho$-lacunary sequence with $\rho >1$. Let $T^*$ be   the operator   defined in \eqref{Formu:MaxSquareFun}. We have the following statements.
\begin{enumerate}[(a)]
    \item For any $1<p<\infty$ and $\omega\in A_p$,  there exists a constant $C$ depending  on $\rho, p,\omega,  \lambda$ and $\norm{v}_{l^\infty(\mathbb Z)}$ such that
 $$\norm{T^* f}_{L^p(\mathbb R_+, \omega dm_\lambda)}\leq C\norm{f}_{L^p(\mathbb R^{n}, \omega dm_\lambda)},$$
 for all functions $f\in L^p(\real_+, \omega dm_\lambda).$
    \item For any  $\omega\in A_1$, there exists a constant $C$ depending  on $\rho, \omega, \lambda$ and  $\norm{v}_{\ell^\infty(\mathbb Z)}$   such that
 $$\omega\left({\{x\in \real_+:\abs{T^* f(x)}>\sigma\}}\right) \le C\frac{1}{\sigma}\norm{f}_{L^1(\mathbb R_+, \omega dm_\lambda)}, \quad \sigma>0,$$
for all functions $f\in L^1(\real_+, \omega  dm_\lambda).$

\item Let $\{a_j\}_{j\in \mathbb Z}$ be  a $\rho$-lacunary sequence. Given $f\in L^\infty(\real_+, dm_\lambda),$ then either $T^* f(x) =\infty$ for all $x\in \mathbb R_+$, or $T^* f(x) < \infty$ for $a. e.$  $x\in \mathbb R_+$. And in this latter case, there exists a constant $C$ depending  on $\lambda, \rho$ and  $\norm{v}_{\ell^\infty(\mathbb Z)}$ such that
$$\norm{T^*f}_{BMO(\mathbb R_+, dm_\lambda)}\leq C\norm{f}_{L^\infty(\mathbb R_+, dm_\lambda)}.$$
\item Let $\{a_j\}_{j\in \mathbb Z}$ be a $\rho$-lacunary sequence. Given $f\in BMO(\real_+),$ then either $T^* f(x) =\infty$ for all $x\in \mathbb R_+$, or $T^* f(x) < \infty$ for $a. e.$  $x\in \mathbb R_+$.  And in this latter case,  there exists a constant $C$ depending  on $\rho, \lambda$ and  $\norm{v}_{\ell^\infty(\mathbb Z)}$ such that
\begin{equation}\label{sharp}\norm{T^*f}_{ BMO(\mathbb R_+, dm_\lambda)}\leq C\norm{f}_{BMO(\mathbb R_+)}.
\end{equation}
\end{enumerate}
\end{thm}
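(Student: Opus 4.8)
The plan is to deduce \eqref{sharp} from a uniform‑in‑$N$ oscillation estimate for the linear operators $T_N$, together with the $L^p$‑boundedness already proved in part (a). The starting point is the elementary linearization
\[
\abs{T^*f(x)-T^*f(x')}\le \sup_N\abs{T_Nf(x)-T_Nf(x')},
\]
which is immediate from the sublinearity of the supremum and the evenness $\abs{T_N(-g)}=\abs{T_Ng}$; it reduces the whole estimate to controlling the oscillation of the genuinely linear kernels
\[
K_N(x,y)=\sum_{j=N_1}^{N_2}v_j\int_{a_j}^{a_{j+1}}\partial_t\P_t(x,y)\,dt ,
\]
where each difference $\P_{a_{j+1}}-\P_{a_j}$ has been written as $\int_{a_j}^{a_{j+1}}\partial_t\P_t\,dt$. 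Since $\Delta_\lambda$ annihilates constants we have $\P_t1=1$, so $T_N$ kills constants and a free additive constant may be subtracted from $f$ on each interval; this is what makes a $BMO$ bound possible.

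Fix $I=I(x_0,r)$ and set $\tilde I=I(x_0,2r)$. I would split
\[
f=f_{\tilde I}+(f-f_{\tilde I})\chi_{\tilde I}+(f-f_{\tilde I})\chi_{\tilde I^{\,c}}=:f_{\tilde I}+f_1+f_2 ,
\]
so that $T_Nf=T_Nf_1+T_Nf_2$. For the local part, part (a) with $\omega\equiv1$ gives $\norm{T^*f_1}_{L^p(\mathbb R_+,dm_\lambda)}\le C\norm{f_1}_{L^p(\mathbb R_+,dm_\lambda)}$ for a fixed $p\in(1,\infty)$; since $m_\lambda$ is doubling and $f\in BMO$, the John--Nirenberg inequality yields $\big(m_\lambda(I)^{-1}\int_{\tilde I}\abs{f-f_{\tilde I}}^p\,dm_\lambda\big)^{1/p}\le C\norm{f}_{BMO}$, whence by Hölder $m_\lambda(I)^{-1}\int_I\abs{T^*f_1}\,dm_\lambda\le C\norm{f}_{BMO}$. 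Through $\sup_N\abs{T_Nf_1(x)-T_Nf_1(x')}\le T^*f_1(x)+T^*f_1(x')$ this controls the contribution of $f_1$ to the mean oscillation of $T^*f$ over $I$.

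For the global part I would use the two Calderón--Zygmund estimates for the master kernel in the space of homogeneous type $(\mathbb R_+,dm_\lambda,\abs{\cdot})$,
\[
\int_0^\infty\abs{\partial_t\P_t(x,y)}\,dt\le\frac{C}{m_\lambda(I(x,\abs{x-y}))},\qquad
\int_0^\infty\abs{\partial_x\partial_t\P_t(x,y)}\,dt\le\frac{C}{\abs{x-y}\,m_\lambda(I(x,\abs{x-y}))},
\]
which underlie the proof of part (a). Because $\abs{v_j}\le\norm{v}_{\ell^\infty(\mathbb Z)}$ these pass to $K_N$ uniformly in $N$, and the mean value theorem in the first variable gives, for $x\in I$ and $y\in\tilde I^{\,c}$,
\[
\sup_N\abs{K_N(x,y)-K_N(x_0,y)}\le C\,\frac{\abs{x-x_0}}{\abs{x_0-y}}\,\frac{1}{m_\lambda(I(x_0,\abs{x_0-y}))} .
\]
Keeping the same index $N$ in both terms makes the singular contributions of $T_Nf_2(x)$ and $T_Nf_2(x_0)$ cancel, so the difference converges absolutely; decomposing $\tilde I^{\,c}$ into the dyadic annuli $2^kr\le\abs{y-x_0}<2^{k+1}r$ and pairing the factor $\abs{x-x_0}/\abs{x_0-y}\le 2^{-k}$ against the averages $m_\lambda(I(x_0,2^{k+1}r))^{-1}\int_{I(x_0,2^{k+1}r)}\abs{f-f_{\tilde I}}\,dm_\lambda\le C(k+1)\norm{f}_{BMO}$ produces the convergent series $\sum_k(k+1)2^{-k}$, and hence $\sup_N\abs{T_Nf_2(x)-T_Nf_2(x_0)}\le C\norm{f}_{BMO}$ for $x\in I$. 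Combining the two parts in the double‑average form of the $BMO$ seminorm gives $m_\lambda(I)^{-1}\int_I\abs{T^*f-c_I}\,dm_\lambda\le C\norm{f}_{BMO}$, which is \eqref{sharp}.

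The dichotomy is governed by the same splitting. Writing $T_Nf=H_N+c_N$ on $I$, where $H_N(x)=T_Nf_1(x)+\int_{\tilde I^{\,c}}(K_N(x,y)-K_N(x_0,y))(f(y)-f_{\tilde I})\,dm_\lambda(y)$ is finite a.e.\ with $\sup_N\abs{H_N}<\infty$ a.e., and $c_N=\int_{\tilde I^{\,c}}K_N(x_0,y)(f(y)-f_{\tilde I})\,dm_\lambda(y)$ is an $x$‑independent (possibly divergent) constant, one sees that either $\sup_N\abs{c_N}<\infty$, in which case $T^*f<\infty$ a.e., or $\sup_N\abs{c_N}=\infty$, in which case $T^*f\equiv\infty$. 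The main obstacle is the second master‑kernel estimate: proving the Hörmander‑type bound for the explicit Bessel--Poisson kernel $\P_t(x,y)$, uniformly in $t$, and then integrating it in $t$ over $(0,\infty)$, is where essentially all of the technical work lies — the measure $dm_\lambda=x^{2\lambda}dx$ and the oscillatory $\theta$‑integral defining $\P_t(x,y)$ making these derivative bounds considerably more delicate than in the classical Laplacian case.
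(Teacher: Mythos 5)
Your treatment of parts (c) and (d) is sound and is in substance the paper's own argument: the paper likewise splits $f$ into a local piece, a tail piece and a constant (taking $I=I(x_0,4\abs{x-x_0})$ in the dichotomy), kills the constant because $\P_t 1=1$, disposes of the local piece through the $L^p$ bound, and controls $\sup_N\abs{T_Nf_2(x)-T_Nf_2(x_0)}$ by the uniform kernel-smoothness estimate of Proposition \ref{Thm:KernelEst} combined with the telescoping bound $\abs{f_{2^lI}-f_{2^{l-1}I}}\le C\norm{f}_{BMO(\real_+,dm_\lambda)}$ over dyadic annuli; the estimate \eqref{sharp} is then recorded as following ``in a parallel way'' from $\mathcal{T}1=\{T_N 1\}=0$, which is exactly the oscillation estimate you wrote out. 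Note also that the two master-kernel bounds you single out as ``where essentially all of the technical work lies'' are not actually an obstacle in this setting: they are precisely Lemma \ref{KerPoisson}, quoted from \cite[Proposition 2.1]{WYZ}, integrated in $t$ as in the proof of Proposition \ref{Thm:KernelEst}.

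The genuine gap is that the statement to be proved is the whole theorem, and your proposal takes part (a) as given (``the $L^p$-boundedness already proved in part (a)'') and says nothing at all about part (b). Part (a) is the heart of the matter and cannot be bypassed: the uniform $L^p$ bounds for the individual operators $T_N$ (Theorem \ref{Thm:BMO}, which hold for \emph{any} increasing sequence) give no control of the supremum $\sup_N\abs{T_Nf}$, and the supremum is exactly where the lacunarity hypothesis must enter --- your proposal never invokes lacunarity anywhere, which is the telltale sign of the missing ingredient. The paper proves (a) via the Cotlar-type inequality $T_M^*f(x)\le C\{\M(T_{(-M,M)}f)(x)+\M_qf(x)\}$ of Theorem \ref{Thm:Maximalcontrol}, whose proof rests on Lemma \ref{cotlar}: part (i) sums $\abs{\P_{a_{j+1}}(x,y)-\P_{a_j}(x,y)}$ over $j\ge m$ using $\rho\le a_{j+1}/a_j\le\rho^2$ (available only after the renormalization of Lemma \ref{Prop:lacunary}), and part (ii) yields the geometric decay $\rho^{-(k-m+1)}$ for the tail sum over $j<m$; both estimates fail for a general increasing sequence. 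Part (b) is then obtained by viewing $\mathcal{T}f=\{T_Nf\}_{N}$ as an $\ell^\infty(\mathbb Z^2)$-valued Calder\'on--Zygmund operator and invoking the vector-valued weak $(1,1)$ theory, a step also absent from your proposal. Since your bound for $T^*f_1$ in the local part of (d) rests on (a), the argument as written is incomplete until that machinery is supplied.
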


\begin{remark}
  From the conclusions  in Theorem \ref{Thm:PoissonLp}, for $f\in L^p(\real_+, \omega dm_\lambda)$ with $\omega\in A_p$,  we can define $T f$ by the limit of $T_N f$ in $L^p(\real_+, \omega dm_\lambda)$ norm
  $$T f(x)=\lim_{(N_1,N_2)\rightarrow (-\infty, +\infty)} T_N f(x),\quad \quad ~x\in \real_+.$$
 For more results related with the convergence of  $T_N f$, see Proposition \ref{Thm:ae}.
\end{remark}

In classical Harmonic Analysis,  if $f= \chi_{(0,1)}$ and $\mathcal{H}$ is the Hilbert transform, it is known  that   $\displaystyle  \frac1{r} \int_{-r}^0 \mathcal{H}(f)(x)dx\  \sim \log\frac{e}{r}$  as $ r \to 0^+$. In general, this is the growth of a singular integral applied to a bounded function  at the origin.
Some related results  about the local behavior of variation operators can be found in \cite{BCT}. One dimensional results about the variation of  convolution operators can be found in \cite{MTX}. And  one dimensional results about  differential transforms of one-sided fractional Poisson type operator sequence is proved in \cite{ZMT}.  The related results about  differential transforms of heat semigroup generated by classical Laplaciane is proved in \cite{ZT}.

The following theorem analyzes the behavior of $T^*$ in $L^\infty$.

\begin{thm} \label{Thm:GrothLinfinity}
Let    $\{v_j\}_{j\in \mathbb Z}\in l^p(\mathbb Z)$ for some $1 \le  p\le \infty$, $\{a_j\}_{j\in \mathbb Z}$ be any increasing sequence and  $T^*$  be defined in (\ref{Formu:MaxSquareFun}). Then for every $f\in L^\infty(\mathbb{R}_+, dm_\lambda)$ with support in the  interval $\tilde I=(0, 1)$,  for any interval $\tilde I_r:=(0, r)$ with $2r<1$, we have
    $$\frac{1}{m_\lambda(\tilde I_r)} \int_{\tilde I_r} \abs{T^* f (x)} dm_\lambda(x)\leq C\left(\log \frac{2}{r}\right)^{1/p'}\norm{v}_{l^p(\mathbb Z)}\|f\|_{L^\infty(\mathbb R_+, dm_\lambda)}.$$
In the statement above,  $\displaystyle p' = \frac{p}{p-1},$ and if $p=1$, $\displaystyle p'=\infty.$
\end{thm}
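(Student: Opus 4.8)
The plan is to linearize the maximal sum in $j$, then separate a local and a global contribution, the logarithm coming entirely from the global one.

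\emph{Step 1 (reduction to a square function and a splitting).} Writing $\Delta_jf(x):=\P_{a_{j+1}}f(x)-\P_{a_j}f(x)$ and applying Hölder's inequality in $j$ to the finite sum defining $T_Nf$, I would get the pointwise bound, uniform in $N$,
\begin{equation*}
T^*f(x)\le\norm{v}_{l^p(\ent)}\Big(\sum_{j\in\ent}\abs{\Delta_jf(x)}^{p'}\Big)^{1/p'}=:\norm{v}_{l^p(\ent)}\,G_{p'}f(x).
\end{equation*}
I then split $f=f_1+f_2$ with $f_1=f\chi_{(0,2r)}$ and $f_2=f\chi_{(2r,1)}$; since $T^*$ is sublinear, $T^*f\le T^*f_1+T^*f_2$, so it suffices to estimate the $\tilde I_r$-averages of $G_{p'}f_1$ and $G_{p'}f_2$ separately.

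\emph{Step 2 (global part: the logarithm).} For $f_2$ I would write $\Delta_jf_2(x)=\int_{a_j}^{a_{j+1}}\partial_t\P_tf_2(x)\,dt$ and use the log-convexity inequality $G_{p'}f_2(x)\le\big(\sup_j\abs{\Delta_jf_2(x)}\big)^{1/p}\big(\sum_j\abs{\Delta_jf_2(x)}\big)^{1/p'}$, valid for every $p'\in[1,\infty]$. Since $\int_0^\infty\P_t(x,y)\,dm_\lambda(y)=1$, the first factor is $\le2\norm{f}_{L^\infty}$, while $\sum_j\abs{\Delta_jf_2(x)}\le\int_0^\infty\abs{\partial_t\P_tf_2(x)}\,dt$. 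Because $\mathrm{supp}\,f_2$ is at distance $\ge r$ from $\tilde I_r$, the size estimate $\abs{\partial_t\P_t(x,y)}\lesssim\big[(t+\abs{x-y})\,m_\lambda(I(x,t+\abs{x-y}))\big]^{-1}$ integrates in $t$ to $\int_0^\infty\abs{\partial_t\P_t(x,y)}\,dt\lesssim m_\lambda(I(x,\abs{x-y}))^{-1}$ with no singularity, so that
\begin{equation*}
\int_0^\infty\abs{\partial_t\P_tf_2(x)}\,dt\lesssim\norm{f}_{L^\infty}\int_{2r}^1\frac{dm_\lambda(y)}{m_\lambda(I(x,\abs{x-y}))}\approx\norm{f}_{L^\infty}\int_{2r}^1\frac{dy}{y}\lesssim\norm{f}_{L^\infty}\log\frac2r,
\end{equation*}
uniformly for $x\in\tilde I_r$. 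This gives $G_{p'}f_2(x)\lesssim\norm{f}_{L^\infty}(\log(2/r))^{1/p'}$ and hence the claimed bound for the global term, for \emph{every} $1\le p\le\infty$.

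\emph{Step 3 (local part for $1\le p\le2$).} For $f_1$ the variable $y$ can equal $x$, the bound $\int_0^\infty\abs{\partial_t\P_t(x,y)}\,dt\approx m_\lambda(I(x,\abs{x-y}))^{-1}$ is non-integrable at $y=x$, and in fact the $1$-variation of $\{\P_t\}$ is unbounded; so the crude $\ell^1$ estimate is useless and one must use cancellation. Here I would invoke the $L^2$-boundedness of $G_2$: by Plancherel for the Hankel transform diagonalizing $\Delta_\lambda$ and the elementary inequality $\sum_j\abs{e^{-a_{j+1}s}-e^{-a_js}}^2\le\big(\sum_j\abs{e^{-a_{j+1}s}-e^{-a_js}}\big)^2=1$ (monotonicity of $s\mapsto e^{-ts}$), one obtains $\norm{G_2f}_{L^2(dm_\lambda)}\le\norm{f}_{L^2(dm_\lambda)}$ uniformly in the sequence. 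For $1\le p\le2$ we have $p'\ge2$, hence $G_{p'}\le G_2$, and Hölder on $\tilde I_r$ together with $\norm{f_1}_{L^2}\le\norm{f}_{L^\infty}m_\lambda((0,2r))^{1/2}$ yield
\begin{equation*}
\frac1{m_\lambda(\tilde I_r)}\int_{\tilde I_r}G_{p'}f_1\,dm_\lambda\le m_\lambda(\tilde I_r)^{-1/2}\norm{G_2f_1}_{L^2}\lesssim\norm{f}_{L^\infty},
\end{equation*}
which is $\le C(\log(2/r))^{1/p'}\norm{f}_{L^\infty}$ since $\log(2/r)\ge\log 4$. Combined with Step 2 this proves the theorem for $1\le p\le2$.

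\emph{Main obstacle.} The genuinely delicate point is the local part for $2<p\le\infty$: there $p'<2$, and $G_{p'}$ is \emph{not} bounded on any $L^s$ uniformly in the (non-lacunary) sequence — this is precisely the failure of $\rho$-variation boundedness for $\rho<2$ — so one cannot afford to pass to $G_{p'}$ and must instead estimate $\sup_N\abs{\sum_{j=N_1}^{N_2}v_j\Delta_jf_1}$ directly, retaining the cancellation among the signed terms $v_j\Delta_jf_1$. Bounding this maximal partial sum by $\norm{v}_{l^p}\norm{f}_{L^\infty}$, up to the admissible factor $(\log(2/r))^{1/p'}$ and uniformly over all increasing sequences, is where the real work lies and where the full strength of the $\ell^p$-summability of $v$ enters.
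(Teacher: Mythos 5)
Your Steps 1--3 are sound, and two of them are genuinely nice: the sup/sum interpolation $G_{p'}f_2\le(\sup_j|\Delta_jf_2|)^{1/p}(\sum_j|\Delta_jf_2|)^{1/p'}$ for the global piece reproduces exactly the paper's $(\log(2/r))^{1/p'}$ bound, which the paper instead gets by a double H\"older inequality against the measures $|\P_{a_{j+1}}(x,y)-\P_{a_j}(x,y)|\,dm_\lambda(y)$ (using \eqref{equ:constant} and $\sum_j|\P_{a_{j+1}}(x,y)-\P_{a_j}(x,y)|\lesssim m_\lambda(I(x,|x-y|))^{-1}$); and your Hankel--Plancherel bound $\norm{G_2f}_{L^2(dm_\lambda)}\le\norm{f}_{L^2(dm_\lambda)}$, valid uniformly over \emph{all} increasing sequences, handles the local piece for $1\le p\le2$ by an argument that is more elementary and more general than anything in the paper. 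But the proposal is incomplete exactly where you flag it: for $2<p\le\infty$ the local estimate is never proved, and this is a genuine gap rather than a deferred technicality --- for $p=\infty$ your Step 1 collapses $T^*$ to $G_1$, which is unbounded near the support of $f_1$, so the reduction to $G_{p'}$ cannot be repaired within your scheme.

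The paper closes the local case for all $p$ at once, with no new cancellation analysis: by Cauchy--Schwarz on $\tilde I_r$ and the $L^2(\mathbb R_+,dm_\lambda)$ boundedness of the maximal operator $T^*$ itself (Theorem \ref{Thm:PoissonLp}(a), obtained from the Cotlar-type inequality of Theorem \ref{Thm:Maximalcontrol}), whose constant depends only on $\lambda$, $\rho$ and $\norm{v}_{\ell^\infty(\mathbb Z)}\le\norm{v}_{\ell^p(\mathbb Z)}$; the $\ell^p$-summability of $v$ is used only on the global piece, precisely as in your Step 2. The price is lacunarity: Theorem \ref{Thm:PoissonLp} assumes a $\rho$-lacunary sequence, and the paper's proof really operates under the standing lacunary normalization introduced after Lemma \ref{Prop:lacunary} (note also the $\rho$-dependent constant in \eqref{equ:constant}), despite the words ``any increasing sequence'' in the statement of Theorem \ref{Thm:GrothLinfinity}. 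So your closing diagnosis should be sharpened: the missing ingredient for $p>2$ is not a finer signed-cancellation estimate for $\sup_N|\sum_{j=N_1}^{N_2}v_j\Delta_jf_1|$ exploiting $\ell^p$-decay, but simply the maximal-operator bound that lacunarity supplies, under which ``the real work'' is one line; conversely, if one insists on literally arbitrary increasing sequences, your $p\le2$ argument is the only sequence-free part of either proof, and your proposal as it stands does not establish the theorem for $2<p\le\infty$.
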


This article is organized as follows. In Section \ref{Sec:Laplacian}, we will get the kernel estimates to see that the kernel $K_N$ of $T_N$ is a vector-valued Caldr\'on-Zygmund kernel, and then we can get the uniform boundedness of $T_N.$ And with a Cotlar's inequality, we can get the proof of Theorem   \ref{Thm:PoissonLp} in Section \ref{Sec:boundednessT}. And we prove Theorem \ref{Thm:GrothLinfinity} in the last section.

Throughout this article, the letters $C, c$ will denote  positive
constants which may change from one instance to another and depend on
the parameters involved. We will make a frequent use, without
mentioning it in relevant places, of the fact that for a positive
$A$ and a non-negative $a,$
$$\sup\limits_{t>0}t^a
e^{-At}=C_{a, A}<\infty.$$ For any $k\in \mathbb R_+$ and $I:=I(x,r)$ for some $x,r\in (0, \infty),$ $kI:=I(x, kr),$ if $x<r,$ then $$I(x,r)=(0, x+r)=I({x+r\over 2}, {x+r\over 2}).$$ Thus, we may assume that $x\ge r.$




\vskip 1cm


\section{Uniform boundedness of the operatrs $T_N$}\label{Sec:Laplacian}

In this section, we  give the proofs of Theorem \ref{Thm:PoissonLp}. {Vector-valued  Calder\'on-Zygmund theory will be a fundamental tool  in proving the $L^p$ boundedness of the operators  $T_N$. For this theory, the reader can see  the paper \cite{RubioRuTo}. Nowadays it is well known that the fundamental ingredients in the theory are the $L^{p_0}(\mathbb{R}^n)$ boundedness for some $1<p_0<\infty$ and the smoothness of the kernel of the operator. Even more, the constants that appear in the results only depend on the boundedness constant in $L^{p_0}(\mathbb{R}^n)$ and the constants related with the size and smoothness of the kernel.}

 Let  $\lambda\in (0, \infty).$ For $y\in \R_+,$ consider the functions
 $$\varphi_y(x)=(xy)^{-\lambda+1/2}J_{\lambda-1/2}(xy),\quad x\in \R_+,$$
where $J_v$ denotes the Bessel function of the first kind and order $v$,  see \cite{Watson}.  It is well known that for each $y\in \R_+$, the function $\varphi_y$ is an eigenfunction of the Bessel operator $\Delta_\lambda,$ and the corresponding eigenvalue is $|y|^2$,
$$\Delta_\lambda \varphi_y=|y|^2\varphi_y, \quad y\in \R_+.$$
  The set $\{\varphi_y(x)\}_{y\in (0, \infty)}$ of eigenfunctions of $\Delta_\lambda$, does not span a dense subset of $L^2(\mathbb R_+, dm_\lambda).$ Thus, we cannot use the usual spectral techniques to define the classical operators associated with $\Delta_\lambda.$  The Hankel transform $H_\lambda$ defined by $$H_\lambda f(y)=\int_{\R_+} \varphi_y(x)f(x)dm_\lambda(x),\quad y\in \R_+,$$
  plays in the Bessel context a similar role as the Fourier transform in the Euclidean setting. It is well known that $H_\lambda$ is an isometry in $L^2(\R_+,dm_\lambda)$ and it coincides there with its inverse, $H_\lambda^{-1}=H_\lambda.$ Moreover, for sufficiently regular functions $f$, say $f\in C_c^\infty(\R_+),$ we have
  $$H_\lambda(\Delta_\lambda f)(y)=|y|^2H_\lambda(f)(y),\quad y\in \R_+.$$

  We consider the nonnegative self-adjoint extension of $\Delta_\lambda$(still denoted by the same symbol) defined by $$\Delta_\lambda f=H_\lambda(|y|^2H_\lambda f), \quad f\in \text{Dom}(\Delta_\lambda),$$
  on the domain $$\text{Dom}(\Delta_\lambda)=\{f\in L^2(\R_+, dm_\lambda): |y|^2H_\lambda f\in L^2(\R_+, dm_\lambda)\}.$$
 Then the spectral decomposition of $\Delta_\lambda$ is given via the Hankel transform.

  In the following theorem, we present and prove  the $L^2$ boundedness of the operators $T_N$ by Hankel transform.

\begin{prop}\label{Thm:L2Estimate}
There is a constant $C$, depending  on $\lambda, \norm{v}_{\ell^\infty(\mathbb Z)}$(not on $N$), such that
 $$ \|T_N f \|_{L^2(\real_+, dm_\lambda)}\leq C \|f \|_{L^2(\real_+, dm_\lambda)}.$$
\end{prop}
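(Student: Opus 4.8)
The plan is to move to the Hankel transform side, where the Poisson semigroup becomes a pointwise multiplier, and then to exploit that $H_\lambda$ is an isometry on $L^2(\real_+, dm_\lambda)$. Recall that $\P_t = e^{-t\sqrt{\Delta_\lambda}}$ and that $\Delta_\lambda$ is diagonalized by $H_\lambda$: the eigenfunction $\varphi_y$ satisfies $\Delta_\lambda \varphi_y = y^2\varphi_y$, so $\sqrt{\Delta_\lambda}$ corresponds under $H_\lambda$ to multiplication by $y$ (here $y\in\real_+$, so $|y|=y$). Consequently
$$H_\lambda(\P_t f)(y) = e^{-ty}\, H_\lambda f(y), \qquad y\in\real_+.$$
Applying this termwise to the finite sum defining $T_N$ in \eqref{Formu:FinSquareFun}, I would obtain
$$H_\lambda(T_N f)(y) = m_N(y)\, H_\lambda f(y), \qquad m_N(y) := \sum_{j=N_1}^{N_2} v_j\big(e^{-a_{j+1} y} - e^{-a_j y}\big).$$
Since the sum is finite, interchanging $H_\lambda$ with the summation poses no convergence issue.

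Because $H_\lambda$ is an isometry on $L^2(\real_+, dm_\lambda)$ that coincides with its inverse, the problem reduces to a uniform (in $N$) bound for the multiplier:
$$\norm{T_N f}_{L^2(\real_+, dm_\lambda)} = \norm{m_N\cdot H_\lambda f}_{L^2(\real_+, dm_\lambda)} \le \norm{m_N}_{L^\infty(\real_+)}\,\norm{f}_{L^2(\real_+, dm_\lambda)}.$$
To estimate $m_N$, fix $y>0$. The crucial observation is that, since $\{a_j\}$ is increasing and $t\mapsto e^{-ty}$ is strictly decreasing, every difference $e^{-a_{j+1}y}-e^{-a_j y}$ is negative, so all terms of the sum share the same sign. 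Pulling out $\norm{v}_{\ell^\infty(\mathbb Z)}$ and telescoping then gives
$$\abs{m_N(y)} \le \norm{v}_{\ell^\infty(\mathbb Z)}\sum_{j=N_1}^{N_2}\big(e^{-a_j y}-e^{-a_{j+1}y}\big) = \norm{v}_{\ell^\infty(\mathbb Z)}\big(e^{-a_{N_1}y}-e^{-a_{N_2+1}y}\big) \le \norm{v}_{\ell^\infty(\mathbb Z)}.$$
This bound is independent of both $N$ and $y$, yielding the claim with $C=\norm{v}_{\ell^\infty(\mathbb Z)}$.

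I expect this argument to be essentially obstruction-free; the only point needing a little care is rigorously justifying the multiplier identity for $\P_t$ on the Hankel side, which follows from the spectral decomposition of $\Delta_\lambda$ recalled above. It is worth emphasizing that, remarkably, \emph{no} lacunarity assumption on $\{a_j\}$ is used at this stage: the telescoping structure alone delivers a bound uniform in $N$, with a constant depending only on $\norm{v}_{\ell^\infty(\mathbb Z)}$ (and trivially on $\lambda$ through the transform). The lacunarity will only become essential later, when passing from this $L^2$ estimate to the $L^p$, weak-type, and $BMO$ bounds via the Calderón–Zygmund kernel estimates and Cotlar's inequality.
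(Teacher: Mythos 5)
Your proposal is correct and follows essentially the same route as the paper: pass to the Hankel transform side via the Plancherel/isometry property and bound the resulting multiplier $m_N(y)=\sum_{j=N_1}^{N_2}v_j\left(e^{-a_{j+1}y}-e^{-a_jy}\right)$ uniformly in $N$ and $y$ by $\norm{v}_{\ell^\infty(\mathbb Z)}$, using only that $t\mapsto e^{-ty}$ is decreasing and no lacunarity. The paper phrases the final estimate by writing each difference as $\int_{a_j}^{a_{j+1}}\partial_t\left(e^{-ty}\right)dt$ and dominating by $\int_0^\infty y\,e^{-ty}\,dt=1$, which is just the integral form of your direct telescoping; the two computations are equivalent.
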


\begin{proof}
Let $f\in \text{Dom}(\Delta_\lambda)$. Using  the Plancherel theorem for Hankel transform, we have
\begin{align*}
 \norm{T_N f }^2_{L^2(\real_+, dm_\lambda)} & = \norm{\sum_{j=N_1}^{N_2} v_j\left(\P_{a_{j+1}} f -\P_{a_{j}}f\right)}^2_{L^2(\real_+, dm_\lambda)}\\
 &=  \int_{\mathbb{R}_+} \Big\{\sum_{j=N_1}^{N_2} v_j\left(\P_{a_{j+1}} f(x) -\P_{a_{j}}f(x)\right) \Big\}^2 dm_\lambda(x) \\
  &=
 \int_{\mathbb{R}_+} \left(H_\lambda{\Big\{\sum_{j=N_1}^{N_2} v_j\left(\P_{a_{j+1}} f(\cdot) -\P_{a_{j}}f(\cdot)\right) \Big\}}(y)\right)^2 dm_\lambda(y)\\
  & =  \int_{\mathbb{R}_+} \Big\{\sum_{j=N_1}^{N_2} v_j \int_{a_j}^{a_{j+1}}\partial_t H_\lambda \left(\P_t f \right)(y)dt  \Big\}^2 dm_\lambda(y)\\
 &\le
   C\norm{v}^2_{\ell^\infty(\mathbb Z)}
   \int_{\mathbb{R}_+} \Big\{\sum_{j=N_1}^{N_2} \Big|\int_{a_j}^{a_{j+1}}\partial_t H_\lambda \left(\P_t f\right)(y) dt\Big|  \Big\}^2 dm_\lambda(y)\\  &=
   C_v
   \int_{\mathbb{R}_+} \Big\{\sum_{j=N_1}^{N_2} \Big|\int_{a_j}^{a_{j+1}}\abs{y} e^{-t\abs{y}} H_\lambda ( f)(y)dt\Big|  \Big\}^2 dm_\lambda(y)\\
   &\le    C_v
   \int_{\mathbb{R}_+} \Big\{\sum_{j=N_1}^{N_2} \int_{a_j}^{a_{j+1}}\abs{y} e^{-t\abs{y}} dt|H_\lambda( f)(y)|  \Big\}^2 dm_\lambda(y)\\ &\le  C_v
   \int_{\mathbb{R}_+} \Big\{\Big|\int_0^\infty\abs{y} e^{-t\abs{y}} dt\Big||H_\lambda( f)(y)|  \Big\}^2 dm_\lambda(y)\\
   & \le  C_{v, \lambda} \|f\|_{L^2(\real_+, dm_\lambda)}^2.
\end{align*}
Since $\text{Dom}(\Delta_\lambda)$ is dense in $L^2(\real_+, dm_\lambda),$ $T_N$ extends uniquely to  a bounded linear operator on $L^2(\real_+, dm_\lambda).$
Then the proof of the theorem is complete.
\end{proof}

It is straightforward from the definition of $m_\lambda$ that there exists a finite constant $C>1$ such that for all $x,r \in \mathbb R_+,$
$$C^{-1} m_\lambda(I(x,r))\le x^{2\lambda}r+r^{2\lambda+1}\le Cm_\lambda(I(x,r)).$$
This means that $(\mathbb R_+, |\cdot|, dm_\lambda)$ is a space of homogeneous type in the sense of \cite{CW1, CW2}.
In order to use the Calder\'on-Zygmund theory, we need prove a proposition  containing the description of the kernel. To begin with, we need the following lemma on the upper bounds of the Poisson kernel and its derivatives, which is useful tool in the later proofs.
\begin{lem}[See {\cite[Proposition 2.1]{WYZ}}]\label{KerPoisson}
There exists a positive constant $C$ such that for any $x,y,t\in (0, \infty)$,
\begin{enumerate}[\indent i)]
\item \begin{equation*}
\abs{\P_t(x,y)}\le C{t\over {(|x-y|^2+t^2)^{\lambda+1}}},
\end{equation*}\\
and \begin{equation*}
\abs{\P_t(x,y)}\le C{t\over {(xy)^{\lambda}(|x-y|^2+t^2)}}.
\end{equation*}\\
\item   \begin{equation*}
\abs{\partial_x\P_t(x,y)}\le C{t\over {(|x-y|^2+t^2)^{\lambda+{3\over 2}}}},
\end{equation*}
and \begin{equation*}
\abs{\partial_x\P_t(x,y)}\le C{t\over {(xy)^{\lambda}(|x-y|^2+t^2)^{3\over 2}}}.
\end{equation*}
\item   \begin{equation*}
\abs{\partial_t\P_t(x,y)}\le C{1\over {(|x-y|^2+t^2)^{\lambda+{1}}}},
\end{equation*}
and \begin{equation*}
\abs{\partial_t\P_t(x,y)}\le C{1\over {(xy)^{\lambda}(|x-y|^2+t^2)}}.
\end{equation*}
\item   \begin{equation*}
\abs{\partial_y\partial_t\P_t(x,y)}+ \abs{\partial_x\partial_t\P_t(x,y)}\le C{1\over {(|x-y|^2+t^2)^{\lambda+{3\over 2}}}},
\end{equation*}
and \begin{equation*}
\abs{\partial_y\partial_t\P_t(x,y)}+\abs{\partial_x\partial_t\P_t(x,y)}\le C{1\over {(xy)^{\lambda}(|x-y|^2+t^2)^{3\over 2}}}.
\end{equation*}
\end{enumerate}

\end{lem}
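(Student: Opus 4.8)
The plan is to argue directly from the closed form
\[
\P_t(x,y)={2\lambda t\over \pi}\int_0^\pi {(\sin \theta)^{2\lambda-1}\over D(\theta)^{\lambda+1}}\,d\theta,\qquad D(\theta):=x^2+y^2+t^2-2xy\cos\theta,
\]
and to exploit the elementary identity $D(\theta)=|x-y|^2+t^2+4xy\sin^2(\theta/2)$, which yields the two lower bounds $D(\theta)\ge |x-y|^2+t^2=:A$ and $D(\theta)\ge 4xy\sin^2(\theta/2)$. All four derivative kernels are obtained by differentiating under the integral sign (justified since the integrand and its derivatives are dominated locally uniformly in $(x,y,t)$ on compact subsets of $\R_+^3$). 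Each differentiation in $x$ or $y$ lowers the exponent of $D$ by one and inserts a factor $x-y\cos\theta$ or $y-x\cos\theta$; since $D(\theta)-(x-y\cos\theta)^2=y^2\sin^2\theta+t^2\ge0$, and symmetrically for the other, one has $\abs{x-y\cos\theta}\le \sqrt{D(\theta)}$ and $\abs{y-x\cos\theta}\le\sqrt{D(\theta)}$, so each spatial derivative costs only half a power of $D$. Each $\partial_t$ either removes the prefactor $t$ or inserts a factor $2t$ with the exponent lowered by one, the latter being controlled by $t^2\le A\le D(\theta)$. After these reductions every claimed bound follows from estimating
\[
\mathcal I_\mu:=\int_0^\pi \frac{(\sin\theta)^{2\lambda-1}}{D(\theta)^{\mu}}\,d\theta,
\]
for the relevant exponents $\mu\in\{\lambda+1,\lambda+\tfrac32\}$, together with the harmless prefactors $t$, $1$ or $t^2/D$; note that in all cases $\mu>\lambda$.

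For the first inequality in each item (the one with $|x-y|^2+t^2$ in the denominator) I would simply use $D(\theta)\ge A$ to pull $A^{-\mu}$ out of $\mathcal I_\mu$, leaving $\int_0^\pi(\sin\theta)^{2\lambda-1}\,d\theta$, which is finite because $2\lambda-1>-1$. Tracking the prefactors then gives the bounds $tA^{-(\lambda+1)}$, $tA^{-(\lambda+3/2)}$, $A^{-(\lambda+1)}$ and $A^{-(\lambda+3/2)}$ in items (i)--(iv), respectively.

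For the second inequality in each item (the one carrying the weight $(xy)^{-\lambda}$) the key step is the substitution $w=\sin^2(\theta/2)$, which turns $\mathcal I_\mu$ into a Beta-type integral,
\[
\mathcal I_\mu = 2^{2\lambda-1}\int_0^1 \frac{w^{\lambda-1}(1-w)^{\lambda-1}}{(A+4xy\,w)^{\mu}}\,dw .
\]
I would split this at $w=\tfrac12$. On $[\tfrac12,1]$ one has $A+4xyw\ge \max\{A,2xy\}$, so $(A+4xyw)^\mu\ge (2xy)^\lambda A^{\mu-\lambda}$, while $\int_{1/2}^1 w^{\lambda-1}(1-w)^{\lambda-1}\,dw$ converges since $\lambda>0$. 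On $[0,\tfrac12]$, where $(1-w)^{\lambda-1}$ is bounded, the change of variable $v=4xy\,w$ gives $(4xy)^{-\lambda}\int_0^{2xy} v^{\lambda-1}(A+v)^{-\mu}\,dv\le (4xy)^{-\lambda}\int_0^\infty v^{\lambda-1}(A+v)^{-\mu}\,dv=(4xy)^{-\lambda}A^{\lambda-\mu}B(\lambda,\mu-\lambda)$, a finite Beta integral precisely because $\mu>\lambda$. Both pieces are therefore bounded by $C(xy)^{-\lambda}A^{-(\mu-\lambda)}$, which for $\mu=\lambda+1$ and $\mu=\lambda+\tfrac32$ produces exactly the factors $(xy)^{-\lambda}A^{-1}$ and $(xy)^{-\lambda}A^{-3/2}$ demanded in (i)--(iv).

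The main obstacle is this last, weighted family of bounds: the nontrivial point is that one must use both lower bounds for $D(\theta)$ simultaneously, packaged through the $w$-substitution, and that the resulting Beta integral converges only because every exponent appearing is strictly larger than $\lambda$. The first, unweighted bounds and the reduction of the derivatives to $\mathcal I_\mu$ are routine once the identity for $D(\theta)$ and the numerator estimates $\abs{x-y\cos\theta}\le\sqrt{D(\theta)}$, $\abs{y-x\cos\theta}\le\sqrt{D(\theta)}$ are in hand.
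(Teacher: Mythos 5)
Your proof is correct, but note that the paper itself does not prove this lemma at all: it is imported verbatim as \cite[Proposition 2.1]{WYZ}, so there is no internal proof to compare against. What you have written is a complete, self-contained derivation of the cited result, and every step checks out: the identity $D(\theta)=|x-y|^2+t^2+4xy\sin^2(\theta/2)$ is right; the numerator bounds $\abs{x-y\cos\theta}\le\sqrt{D(\theta)}$ and $\abs{y-x\cos\theta}\le\sqrt{D(\theta)}$ follow exactly as you say from $D(\theta)-(x-y\cos\theta)^2=y^2\sin^2\theta+t^2\ge 0$ (and symmetrically), so each spatial derivative indeed costs half a power of $D$; the $t$-derivative bookkeeping via $t^2\le A\le D(\theta)$ correctly reduces items $iii)$ and $iv)$ to $\mathcal I_{\lambda+1}$ and $\mathcal I_{\lambda+3/2}$; and the substitution $w=\sin^2(\theta/2)$, with $(\sin\theta)^{2\lambda-1}\,d\theta=2^{2\lambda-1}w^{\lambda-1}(1-w)^{\lambda-1}\,dw$, together with the split at $w=\tfrac12$ and the Beta integral $\int_0^\infty v^{\lambda-1}(A+v)^{-\mu}\,dv=A^{\lambda-\mu}B(\lambda,\mu-\lambda)$, gives the weighted bounds precisely because $\lambda>0$ and $\mu-\lambda>0$ in all four cases. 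Your approach is essentially the standard argument (the same genre as the proof in \cite{WYZ}): work from the closed form of the kernel and trade powers of $D$ against the factor $4xy\sin^2(\theta/2)$. What your write-up buys over the paper's citation is transparency about exactly where the hypotheses enter --- the finiteness of $\int_0^\pi(\sin\theta)^{2\lambda-1}\,d\theta$ needs $\lambda>0$, and the convergence of the Beta integral needs $\mu>\lambda$, which is why the exponents $\lambda+1$ and $\lambda+\tfrac32$ appear with exactly the losses $A^{-1}$ and $A^{-3/2}$ in the $(xy)^{-\lambda}$-weighted estimates. One small point worth making explicit if you polish this: differentiation under the integral sign should be justified on compact subsets of $\R_+^3$, where $D\ge t^2$ is bounded below, so the differentiated integrands are dominated by $C(\sin\theta)^{2\lambda-1}$; you assert this in passing and it is indeed immediate.
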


\begin{prop}\label{Thm:KernelEst}
Let $f\in L^p(\mathbb{R}_+, dm_\lambda), 1\le p \le \infty$. Then
$$T_Nf(x) = \int_{\mathbb{R}_+} K_N(x,y)f(y)dm_\lambda(y)$$
with
\begin{align*}\label{kernel}
&K_N(x, y) =  \sum_{j=N_1}^{N_2}v_j \left(\P_{a_{j+1}} (x,y)-\P_{a_{j}}(x,y)\right)\\
&={2\lambda\over \pi}\sum_{j=N_1}^{N_2}v_j \left(\int_0^\pi{a_{j+1}(\sin \theta)^{2\lambda-1}\over {(x^2+y^2+a_{j+1}^2-2xy\cos \theta)^{\lambda+1}}}d\theta -\int_0^\pi{a_{j}(\sin \theta)^{2\lambda-1}\over {(x^2+y^2+a_{j}^2-2xy\cos \theta)^{\lambda+1}}}d\theta\right).
\end{align*}
Moreover, there exists  constant $C>0$ depending  on $\lambda$  and $\norm{v}_{\ell^\infty(\mathbb Z)}$(not on $N$) such that, for any $x\neq y,$
\begin{enumerate}[\indent i)]
  \item $\displaystyle   |K_N(x,y)|\leq \frac{C}{m_\lambda(I(x,|x-y|))}$,
  \item $\displaystyle   |\partial_x K_N(x,y)|+ |\partial_y K_N(x,y)|\leq \frac{C}{m_\lambda(I(x,|x-y|))\cdot |x-y|}$.
\end{enumerate}
\end{prop}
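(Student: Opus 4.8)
The plan is to pass from the finite sum defining $T_N$ to its integral kernel and then to reduce both estimates to a single $t$-integral of derivatives of the Poisson kernel, bounded uniformly in $N$. First I would use that each $\P_a$ is integration against $\P_a(x,y)$ and that the sum in \eqref{Formu:FinSquareFun} is finite to interchange sum and integral, obtaining $T_N f(x)=\int_{\R_+}K_N(x,y)f(y)\,dm_\lambda(y)$ with $K_N$ as displayed; inserting the closed form of $\P_t(x,y)$ recalled in the introduction gives the explicit formula, and the interchange is justified by Fubini together with the size bound of Lemma \ref{KerPoisson} i). The decisive reduction is then to write each difference through the fundamental theorem of calculus, $\P_{a_{j+1}}(x,y)-\P_{a_j}(x,y)=\int_{a_j}^{a_{j+1}}\partial_t\P_t(x,y)\,dt$. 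Since $\{a_j\}$ is increasing, the intervals $[a_j,a_{j+1}]$, $N_1\le j\le N_2$, are pairwise non-overlapping and lie in $(0,\infty)$, so after bounding $|v_j|\le\norm{v}_{\ell^\infty(\mathbb Z)}$ and summing I obtain
\[
|K_N(x,y)|\le \norm{v}_{\ell^\infty(\mathbb Z)}\int_0^\infty|\partial_t\P_t(x,y)|\,dt,
\]
a bound free of $N$, which is exactly the uniformity the statement asks for.

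Write $r=|x-y|$. For (i) I would estimate the last integral using the two complementary bounds of Lemma \ref{KerPoisson} iii). The scaling $t=rs$ gives $\int_0^\infty (r^2+t^2)^{-\lambda-1}\,dt = C r^{-2\lambda-1}$, while the second bound gives $\int_0^\infty (xy)^{-\lambda}(r^2+t^2)^{-1}\,dt = C (xy)^{-\lambda} r^{-1}$. Comparing with $m_\lambda(I(x,r))\approx x^{2\lambda}r+r^{2\lambda+1}$, I split into two regimes: when $r\gtrsim x$ one has $m_\lambda(I(x,r))\approx r^{2\lambda+1}$ and the first estimate is sharp; when $r\lesssim x$ one has $y\approx x$, hence $(xy)^\lambda\approx x^{2\lambda}$ and $m_\lambda(I(x,r))\approx x^{2\lambda}r$, and the second estimate is sharp. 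Either way the integral is $\le C/m_\lambda(I(x,r))$, which is (i).

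For (ii) the scheme is identical. I would first differentiate under the integral sign in $x$ and in $y$ --- legitimate by dominated convergence via the bounds of Lemma \ref{KerPoisson} iv) --- to get $\partial_x K_N=\sum_j v_j\int_{a_j}^{a_{j+1}}\partial_x\partial_t\P_t\,dt$ and likewise for $\partial_y$, and then the same disjointness argument yields $|\partial_x K_N(x,y)|+|\partial_y K_N(x,y)|\le \norm{v}_{\ell^\infty(\mathbb Z)}\int_0^\infty\big(|\partial_x\partial_t\P_t(x,y)|+|\partial_y\partial_t\P_t(x,y)|\big)\,dt$. The bounds of Lemma \ref{KerPoisson} iv), together with $\int_0^\infty(r^2+t^2)^{-\lambda-3/2}\,dt=Cr^{-2\lambda-2}$ and $\int_0^\infty (xy)^{-\lambda}(r^2+t^2)^{-3/2}\,dt = C(xy)^{-\lambda}r^{-2}$, compared with the target $1/(m_\lambda(I(x,r))\,r)\approx 1/(x^{2\lambda}r^2+r^{2\lambda+2})$ in the same two regimes, give (ii).

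The routine parts are the one-dimensional $t$-integrals and the kernel representation. The only real subtlety, and the step I would treat most carefully, is the regime splitting that forces one to switch between the two bounds in Lemma \ref{KerPoisson}: the purely ``Euclidean'' bound in $|x-y|^2+t^2$, which is sharp when $r\gtrsim x$, and the ``Bessel'' bound carrying the factor $(xy)^{-\lambda}$, which is sharp when $r\lesssim x$. Matching the outcome to the homogeneous-space quantity $m_\lambda(I(x,r))$ rather than to a bare power of $r$ relies on the equivalence $y\approx x$ when $r\lesssim x$ and on the two-sided comparison $m_\lambda(I(x,r))\approx x^{2\lambda}r+r^{2\lambda+1}$, and it is this bookkeeping that delivers the correct homogeneity uniformly in $N$.
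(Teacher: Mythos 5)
Your proposal is correct and follows essentially the same route as the paper: the fundamental theorem of calculus in $t$ combined with the disjointness of the intervals $[a_j,a_{j+1}]$ reduces $|K_N|$ (uniformly in $N$) to $\norm{v}_{\ell^\infty(\mathbb Z)}\int_0^\infty|\partial_t\P_t(x,y)|\,dt$, and your two regimes $r\gtrsim x$ versus $r\lesssim x$ (with $y\approx x$ in the latter) are exactly the paper's Case 1 ($x\le 2|x-y|$) and Case 2 ($x>2|x-y|$), using the same pair of bounds from Lemma \ref{KerPoisson} iii) and iv). If anything, your treatment of part ii) is more explicit than the paper's, which only remarks that the argument is similar, whereas you also justify the differentiation under the integral sign.
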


\begin{proof}
{\it i)}~Regarding  the size condition for the kernel, we consider it into two cases:

 Case 1: $x\le 2|x-y|$. In this case, $$m_\lambda{(I(x, |x-y|))}\sim |x-y|^{2\lambda+1}.$$    By Lemma \ref{KerPoisson} $iii)$,  we have
\begin{align*}
  |K_N (x, y)| &\leq  \sum_{j=N_1}^{N_2}\abs{v_j} \abs{\P_{a_{j+1}} (x,y)-\P_{a_{j}(x,y)}(x,y)}\\
&\le C_{\lambda, v} \sum_{j=-\infty}^{\infty}  \left|\int_{a_j}^{a_{j+1}}\partial_t  \P_t(x, y) dt\right|\\
   &\le C_{\lambda, v}\int_{0}^{\infty}{1\over {(|x-y|^2+t^2)^{\lambda+{1}}}}{dt } \\
    &\le  C_{\lambda, v}{1 \over  {|x-y|^{2\lambda+1}}}=\frac{C}{m_\lambda(I(x,|x-y|))}.
\end{align*}

Case 2: $x>2|x-y|$. In this case, $\displaystyle {x\over 2}\le y\le {3x\over 2}$ and $$m_\lambda{(I(x, |x-y|))}\sim  x^{2\lambda}|x-y|\sim  (xy)^{\lambda}|x-y|.$$    By Lemma \ref{KerPoisson} $iii)$,  we have
\begin{align*}
  |K_N (x, y)| &\leq \frac{2\lambda}{ \pi} \sum_{j=N_1}^{N_2}\abs{v_j} \abs{\P_{a_{j+1}} (x,y)-\P_{a_{j}(x,y)}(x,y)}\\
&\le C_{\lambda, v} \sum_{j=-\infty}^{\infty}  \left|\int_{a_j}^{a_{j+1}}\partial_t  \P_t(x, y) dt\right|\\
   &\le C_{\lambda, v}\int_{0}^{\infty}{1\over {(xy)^\lambda(|x-y|^2+t^2)}}{dt } \\
    &\le  C_{\lambda, v}{1 \over  {(xy)^\lambda |x-y|}}=\frac{C}{m_\lambda(I(x,|x-y|))}.
\end{align*}

{\it ii)}~
 With a similar argument as above in $i)$,  we can get the proof of $ii)$ but by using the estimates in Lemma \ref{KerPoisson} $iv).$
The proof of the  proposition is complete.
\end{proof}

\begin{thm}\label{Thm:BMO}  Let $T_N$ be  the operator defined in (\ref{Formu:FinSquareFun}). We have the following statements.
\begin{enumerate}[(a)]
\item For any $1<p<\infty$ and $\omega\in A_p$,  there exists a constant $C$ depending  on $ p,\omega,  \lambda$ and $\norm{v}_{l^\infty(\mathbb Z)}$ such that
 $$\norm{T_N f}_{L^p(\mathbb R_+, \omega dm_\lambda)}\leq C\norm{f}_{L^p(\mathbb R¡ª¡ª+, \omega dm_\lambda)},$$
 for all functions $f\in L^p(\real_+, \omega dm_\lambda).$
    \item  For any  $\omega\in A_1$, there exists a constant $C$ depending  on $ \omega, \lambda$ and $\norm{v}_{\ell^\infty(\mathbb Z)}$    such that
 $$\omega\left({\{x\in \real_+:\abs{T_N f(x)}>\sigma\}}\right) \le C\frac{1}{\sigma}\norm{f}_{L^1(\mathbb R_+, \omega dm_\lambda)}, \quad \sigma>0,$$
for all functions $f\in L^1(\real_+, \omega  dm_\lambda).$
    \item Let $\{a_j\}_{j\in \mathbb Z}$ be a positive increasing sequence. There exists a constant $C$ depending on $\lambda$ and $\norm{v}_{\ell^\infty(\mathbb Z)}$ such that
$$\norm{T_N f}_{BMO(\mathbb R_+, dm_\lambda)}\leq C\norm{f}_{L^\infty(\mathbb R_+, dm_\lambda)},$$
for all functions $f\in L^\infty(\real_+).$
\item Let $\{a_j\}_{j\in \mathbb Z}$ be a positive increasing sequence. There exists a constant $C$ depending  on $\lambda$ and $\norm{v}_{\ell^\infty(\mathbb Z)}$ such that
     $$\norm{T_N f}_{BMO(\real_+, dm_\lambda)}\le C\norm{f}_{BMO(\real_+, dm_\lambda)}.$$

\end{enumerate}

The constants $C$ appeared above all are independent of $N.$
\end{thm}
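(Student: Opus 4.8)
The plan is to observe that, with Proposition \ref{Thm:L2Estimate} and Proposition \ref{Thm:KernelEst} already established, all four assertions are standard consequences of Calder\'on--Zygmund theory on the space of homogeneous type $(\real_+, \abs{\cdot}, dm_\lambda)$. Indeed, Proposition \ref{Thm:KernelEst} shows that $K_N$ is a standard Calder\'on--Zygmund kernel with size and smoothness constants that do not depend on $N$, while Proposition \ref{Thm:L2Estimate} provides the $L^2(\real_+, dm_\lambda)$ boundedness with a bound independent of $N$. Since the weighted Calder\'on--Zygmund theory (see \cite{RubioRuTo, Duo}) yields constants depending only on the $L^2$ norm and on the kernel constants, parts (a) and (b) follow immediately, with constants independent of $N$: the weighted strong $(p,p)$ bound for $\omega \in A_p$, $1<p<\infty$, and the weighted weak $(1,1)$ bound for $\omega \in A_1$.

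For part (c) I would argue by hand. Fix $I = I(x_0, r)$ and write $f = f\chi_{2I} + f\chi_{(2I)^c} =: f_1 + f_2$. For the local part, Jensen's inequality and the unweighted $L^2$ boundedness of $T_N$ give
$$\frac{1}{m_\lambda(I)}\int_I \abs{T_N f_1}\, dm_\lambda \le \Big(\frac{1}{m_\lambda(I)}\int_I \abs{T_N f_1}^2\, dm_\lambda\Big)^{1/2} \le \frac{C}{m_\lambda(I)^{1/2}}\norm{f_1}_{L^2(\real_+, dm_\lambda)} \le C\norm{f}_{L^\infty(\real_+, dm_\lambda)},$$
where the last step uses $\norm{f_1}_{L^2}^2 \le \norm{f}_{L^\infty}^2\, m_\lambda(2I) \le C\norm{f}_{L^\infty}^2\, m_\lambda(I)$ by doubling. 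Taking the constant $c = T_N f_2(x_0)$ and the smoothness bound of Proposition \ref{Thm:KernelEst} ii), for $x\in I$ one has $\abs{T_N f_2(x) - c} \le C\norm{f}_{L^\infty}\int_{(2I)^c}\frac{r\, dm_\lambda(y)}{m_\lambda(I(x_0, \abs{x_0-y}))\,\abs{x_0-y}} \le C\norm{f}_{L^\infty}$, the last inequality coming from the dyadic annular decomposition of $(2I)^c$ and doubling. Averaging over $I$ and combining the two pieces proves $\norm{T_N f}_{BMO(\real_+, dm_\lambda)} \le C\norm{f}_{L^\infty(\real_+, dm_\lambda)}$.

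Part (d) rests on the key fact that $T_N$ annihilates constants: since $\Delta_\lambda 1 = 0$ we have $\P_t 1 = 1$, so $\P_{a_{j+1}}1 - \P_{a_j}1 = 0$ and hence $T_N 1 = 0$. Consequently, for $I = I(x_0, r)$ we may subtract $f_{2I}$ without changing $T_N f$, and we split $f - f_{2I} = (f-f_{2I})\chi_{2I} + (f-f_{2I})\chi_{(2I)^c} =: g_1 + g_2$. The local term $g_1$ is treated exactly as in (c), now using the John--Nirenberg inequality in the doubling setting to get $\norm{g_1}_{L^2}^2 = \int_{2I}\abs{f-f_{2I}}^2\, dm_\lambda \le C\norm{f}_{BMO}^2\, m_\lambda(I)$. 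For the far term, take $c = T_N g_2(x_0)$; decomposing $(2I)^c$ into the annuli $2^{k+1}I\setminus 2^k I$, $k\ge 1$, the smoothness bound gives
$$\abs{T_N g_2(x) - c} \le C\sum_{k\ge1}\frac{r}{m_\lambda(2^k I)\, 2^k r}\int_{2^{k+1}I}\abs{f - f_{2I}}\, dm_\lambda \le C\norm{f}_{BMO}\sum_{k\ge1}\frac{k+1}{2^k} \le C\norm{f}_{BMO},$$
where I use $\int_{2^{k+1}I}\abs{f - f_{2I}}\, dm_\lambda \le C(k+1)\norm{f}_{BMO}\, m_\lambda(2^{k+1}I)$, itself a consequence of the telescoping estimate $\abs{f_{2^{k+1}I} - f_{2I}}\le Ck\norm{f}_{BMO}$ and doubling.

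The argument is conceptually routine; the main obstacle is bookkeeping in the space of homogeneous type, namely checking that the far-part kernel integrals converge with the stated constants. This hinges on the doubling relation $m_\lambda(I(x,r))\sim x^{2\lambda}r + r^{2\lambda+1}$ and, for (d), on the logarithmic growth estimate $\abs{f_{2^{k+1}I} - f_{2I}}\le Ck\norm{f}_{BMO}$. Well-definedness of $T_N f$ for $f\in L^\infty$ and $f\in BMO$ is guaranteed by the pointwise Poisson bounds of Lemma \ref{KerPoisson}, and the uniformity in $N$ throughout is automatic, since the constants in Propositions \ref{Thm:L2Estimate} and \ref{Thm:KernelEst} do not depend on $N$.
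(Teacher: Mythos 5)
Your proposal is correct and follows essentially the same route as the paper: parts (a) and (b) via uniform Calder\'on--Zygmund theory on $(\real_+,\abs{\cdot},dm_\lambda)$ from Propositions \ref{Thm:L2Estimate} and \ref{Thm:KernelEst}, and parts (c), (d) via the local/far decomposition around an interval together with $T_N1=0$, which is exactly what the paper uses. The only difference is emphasis: the paper devotes most of its effort to the a.e.\ finiteness of $T_Nf$ for $f\in BMO(\real_+,dm_\lambda)$ (using the extra $t$-decay of the kernel as in \eqref{eq:max}, which the uniform size bound of Proposition \ref{Thm:KernelEst} alone would not give), a point you compress into one sentence citing Lemma \ref{KerPoisson}, while you instead write out explicitly the standard $L^\infty\to BMO$ and $BMO\to BMO$ estimates that the paper delegates to the cited Calder\'on--Zygmund theory.
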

\begin{proof}
By Propositions \ref{Thm:L2Estimate} and \ref{Thm:KernelEst}, we know that the operator $T_N$ is a Calder\'on-Zygmund operator with standard kernel. Previously, we have remarked  that the constants  in the $L^p$ boundedness only depend on the initial constant in $L^{p_0}(\mathbb{R}_+)$(in our case $p_0=2$), the size constant and smoothness constant of the kernel.  Hence the uniform boundedness of the operators $T_N$ in $L^p(\mathbb{R}_+, dm_\lambda)$ spaces is a direct consequence of the vector-valued Calder\'on-Zygmund theory in homogeneous type space $(\mathbb R_+, |\cdot|, dm_\lambda)$.  So, it is bounded on the weighted $L_p$-spaces for $1<p<\infty$, and is of weak type $(1,1)$. Then, we get the proof of $(a)$ and $(b)$.

For $(c)$ and $(d),$ we can prove them as following.
 The finiteness of $T_N$ for functions in  $L^\infty(\real_+, dm_\lambda)$ is obvious, since for each $N$, $K_N$ is an integrable function.
  On the other hand, if $f\in BMO(\real_+, dm_\lambda)$, we can proceed as follows. Let $I=I(x_0, r_0)$ and $I^*=2I=I(x_0, 2r_0)$ with some $x_0\in \real_+$ and $r_0>0$. We decompose $f$ to  be
$$f=(f-f_{I })\chi_{I^*}+(f-f_{I })\chi_{(I^*)^c}+f_{I }=:f_1+f_2+f_3,$$
where $\displaystyle f_{I }={1\over m_\lambda(I(x_0,r_0))}\int_{I(x_0,r_0)}f(y)\  dm_\lambda(y).$
The function $f_1$ is integrable, hence  $T_N f_1(x)<\infty,$ $a.e.\ x\in \real_+.$ For $T_N f_2$, we note that, for any $x\in I$,   $y\in \{y\in \mathbb R_+: 2^k r_0<|x_0-y|\le 2^{k+1}r_0\}$ with  $k\ge 1$, $|x-y|\sim |y-x_0|$ and we claim that
\begin{equation}\label{eq:max}
 \max \left\{ {t\over{(|x-y|^2+t^2)^{\lambda+1}}},\ {t\over {(xy)^{\lambda}(|x-y|^2+t^2)}}    \right\}\le C {t\over {m_\lambda(I(x_0, 2^kr_0))\cdot 2^kr_0}}.
\end{equation}
In fact, we can prove \eqref{eq:max} by considering the following two cases:

Case 1: $x_0\le 4\cdot 2^{k}r_0.$ In this case, $$m_\lambda(I(x_0, 2^kr_0))\sim (2^k r_0)^{2\lambda+1}.$$
So
$${t\over{(|x-y|^2+t^2)^{\lambda+1}}}\le {t\over{|x-y|^{2\lambda+2}}}\le C {t\over{(2^kr_0)^{2\lambda+2}}}=C {t\over {m_\lambda(I(x_0, 2^kr_0))\cdot 2^kr_0}}.$$

Case 2: $x_0 >4\cdot  2^{k}r_0.$ In this case,  $$m_\lambda(I(x_0, 2^kr_0))\sim x_0^{2\lambda+1}2^k r_0.$$
Since $x\in I$ and $y\in \{y\in \mathbb R_+: 2^k r_0<|x_0-y|\le 2^{k+1}r_0\}$, $x\sim y\sim x_0.$ And then, we get
$${t\over {(xy)^{\lambda}(|x-y|^2+t^2)}}\le {t\over{x_0^{2\lambda}|x_0-y|^{2}}}\le C {t\over{(x_0^{2\lambda}2^kr_0)\cdot 2^kr_0}}=C {t\over {m_\lambda(I(x_0, 2^kr_0))\cdot 2^kr_0}}.$$

Then, for any $t>0,$ by \eqref{eq:max} we have
\begin{align*}
\abs{\P_t f_2(x)}&= \abs{\int_{\real_+}  \P_t(x,y) f_2 (y)dm_\lambda(y)}\\
 &\le C \sum_{k=1}^\infty\Big|\int_{\{2^kr_0<|x_0-y|\le 2^{k+1}r_0\}\cap \mathbb R_+}
           \max \left\{ {t\over{(|x-y|^2+t^2)^{\lambda+1}}},\ {t\over {(xy)^{\lambda}(|x-y|^2+t^2)}}    \right\}\\
           &\quad\quad\quad\quad\quad\quad\quad\quad\quad\quad\quad\quad\quad\quad
           \quad\quad\quad\quad\quad\quad\quad\quad\quad\quad\quad\quad\quad\quad\quad\quad \cdot ({f(y)-f_{I}})dm_\lambda(y)\Big|\\
&\le Ct\sum_{k=1}^\infty{(2^kr_0)}^{-1}{1\over {m_\lambda(I(x_0, 2^kr_0))}}\abs{\int_{\{|x_0-y|\le 2^{k+1}r_0\}\cap \mathbb R_+}  ({f(y)-f_{I}})dm_\lambda(y)}\\
&\le  Ct  \sum_{k=1}^\infty{(2^kr_0)}^{-1}(1+ k)\norm{f}_{BMO(\real_+, dm_\lambda)}<\infty.
\end{align*}
So, $\P_t f_2(x)$ is finite for any $x\in I$ and $t>0.$ Since $T_N f_2(x)$ is a finite summation and $x_0, r_0$ is arbitrary, $T_N f_2(x)<\infty$ $a.e.$ $x\in \real_+.$
Finally we note that $T_N f_3(x)\equiv 0$, since $\P_{a_j} f_3= f_{I}$ for any $j\in \mathbb Z.$
Hence, $T_N f(x)<\infty$ $a.e.$ $x\in \real_+.$ Then, by Propositions \ref{Thm:L2Estimate} and  \ref{Thm:KernelEst}  we  get the proof of part $(c)$ of   Theorem \ref{Thm:BMO}. Since $T_N1=0$, we can get the  conclusion of $(d)$.
\end{proof}

\section{Boundedness of the Maximal  operator $T^*$}\label{Sec:boundednessT}
In this section, we will give the proof of Theorem \ref{Thm:PoissonLp} related to the boundedness of the maximal operator $T^*$. The next lemma,  parallel to  Proposition 3.2 in \cite{BLMMDT}(also Proposition 3.1 in \cite{ZMT}), shows that, without lost of generality, we may assume that
\begin{equation}\label{equ:lacunary}
1<\rho \leq {a_{j+1} \over a_j}\leq \rho^2, \quad j\in \mathbb Z.
\end{equation}

\begin{lem}\label{Prop:lacunary}
Given a $\rho$-lacunary sequence $\{a_j\}_{j\in \mathbb Z}$ and a multiplying sequence $\{v_j\}_{j\in \mathbb Z}\in \ell^\infty(\mathbb Z)$, we can define a $\rho$-lacunary sequence $\{\eta_j\}_{j\in \mathbb Z}$ and $\{\omega_j\}_{j\in \mathbb Z}\in \ell^\infty(\mathbb Z)$ verifying the following properties:
\begin{enumerate}[(i)]
\item $1<\rho \leq \eta_{j+1}/\eta_j\leq \rho^2,\quad \norm{\omega_j}_{\ell^\infty(\mathbb Z)}=\norm{v_j}_{\ell^\infty(\mathbb Z)}$.
\item For all $N=(N_1, N_2)$, there exists $N'=(N_1', N_2')$ with $T_N=\tilde{T}_{N'},$
where $\tilde{T}_{N'}$ is the operator defined in \eqref{Formu:FinSquareFun} for the new sequences $\{\eta_j\}_{j\in \ent}$ and $\{\omega_j\}_{j\in \ent}.$
\end{enumerate}
\end{lem}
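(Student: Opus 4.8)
The plan is to realize $T_N$ for the coarse sequence $\{a_j\}$ as an operator of exactly the same shape for a refined sequence $\{\eta_j\}$, obtained by inserting additional nodes into those gaps of $\{a_j\}$ whose ratio exceeds $\rho^2$. The engine is the elementary splitting identity: whenever $a_j<b<a_{j+1}$,
\begin{equation*}
v_j\bigl(\P_{a_{j+1}}f-\P_{a_j}f\bigr)=v_j\bigl(\P_{a_{j+1}}f-\P_{b}f\bigr)+v_j\bigl(\P_{b}f-\P_{a_j}f\bigr),
\end{equation*}
so inserting $b$ with the same multiplier $v_j$ leaves the $j$-th term of $T_N$ untouched. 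Iterating, any finite geometric subdivision of a gap $(a_j,a_{j+1})$, all of whose new nodes carry the weight $v_j$, preserves the operator. Hence everything reduces to subdividing each gap into pieces whose consecutive ratios lie in $[\rho,\rho^2]$.

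For the arithmetic step I fix a gap and set $R:=a_{j+1}/a_j\ge\rho$. If $R\le\rho^2$ I leave the gap alone. If $R>\rho^2$ I take $m:=\lfloor\log_\rho R\rfloor$ and insert the $m-1$ nodes $a_jR^{i/m}$, $i=1,\dots,m-1$, so that each new consecutive ratio equals $R^{1/m}$. Since $R>\rho^2$ gives $\log_\rho R>2$, one has $\tfrac12\log_\rho R<m\le\log_\rho R$, equivalently $\rho^m\le R\le\rho^{2m}$, whence $R^{1/m}\in[\rho,\rho^2]$. The crucial lower bound $m>\tfrac12\log_\rho R$ is exactly what $\log_\rho R>2$ buys; this nonemptiness of the admissible window for $m$ is the one point that genuinely uses the hypothesis $R>\rho^2$.

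Next I would assemble the set of all original and inserted nodes. Because $\{a_j\}$ is increasing with ratios $\ge\rho>1$, we have $a_j\to0^+$ as $j\to-\infty$ and $a_j\to+\infty$ as $j\to+\infty$, and each gap receives only finitely many new nodes; so the node set has no accumulation point in $(0,\infty)$ and is order-isomorphic to $\mathbb Z$. Enumerating it increasingly as $\{\eta_k\}_{k\in\mathbb Z}$ (with an arbitrary anchoring of the index) yields $\eta_{k+1}/\eta_k\in[\rho,\rho^2]$ for every $k$, in particular a $\rho$-lacunary sequence. I set $\omega_k:=v_j$ whenever $[\eta_k,\eta_{k+1}]\subset[a_j,a_{j+1}]$. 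Every $\omega_k$ is some $v_j$, and conversely each $v_j$ is attained since the original endpoints are retained as nodes, so $\norm{\omega}_{\ell^\infty(\mathbb Z)}=\norm{v}_{\ell^\infty(\mathbb Z)}$; this gives (i).

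For (ii), given $N=(N_1,N_2)$ the block endpoints $a_{N_1}$ and $a_{N_2+1}$ are themselves nodes, say $a_{N_1}=\eta_{N_1'}$ and $a_{N_2+1}=\eta_{N_2'+1}$. Summing $\omega_k(\P_{\eta_{k+1}}f-\P_{\eta_k}f)$ over $N_1'\le k\le N_2'$ and telescoping within each original gap reconstitutes $v_j(\P_{a_{j+1}}f-\P_{a_j}f)$ for each $N_1\le j\le N_2$, i.e. $\tilde T_{N'}f=T_Nf$. Beyond the window argument above, the only thing to watch is the coherence of the bi-infinite reindexing, which is routine once the subdivision is fixed.
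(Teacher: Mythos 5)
Your proposal is correct and takes essentially the same route as the paper's proof: refine each oversized gap of $\{a_j\}$ by finitely many intermediate nodes so that all consecutive ratios land in $[\rho,\rho^2]$, assign the multiplier $v_j$ to every subinterval of the $j$-th gap, and recover $T_N$ by telescoping, noting that the original nodes $a_{N_1},\dots,a_{N_2+1}$ survive as nodes of the refined sequence. The only cosmetic difference is the insertion rule---your uniform geometric subdivision with ratio $R^{1/m}$, $m=\lfloor\log_\rho R\rfloor$ (whose window $\rho^m\le R\le\rho^{2m}$ you verify correctly), versus the paper's greedy insertion of successive points $\rho a_j,\rho^2 a_j,\dots$ until the residual ratio falls below $\rho^2$---and both constructions retain the original nodes, which is exactly what makes part (ii) work.
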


\begin{proof} We  follow closely the ideas in the proof
of Proposition 3.2 in \cite{BLMMDT}. We include them here for completeness.

Let $\eta_0=a_0$, and let us construct $\eta_j$ for positive $j$ as follows(the argument for
negative $j$ is analogous). If $\rho^2\ge {a_1/ a_0}\ge \rho,$ define $\eta_1=a_1$. In the opposite case where
$a_1/a_0>\rho^2,$  let $\eta_1=\rho a_0.$ It verifies $\rho^2\ge \eta_1/\eta_0=\rho\ge \rho.$ Further, $a_1/\eta_1\ge \rho^2 a_0/\rho a_0=\rho.$  Again, if $a_1/\eta_1\le \rho^2,$ then $\eta_2=a_1.$ If this is not the case, define $\eta_2=\rho^2 a_0\le a_1$.  By
the same calculations as before, $\eta_0, \eta_1, \eta_2$ are part of a lacunary sequence satisfying \eqref{equ:lacunary}.
To continue the sequence, either $\eta_3=a_1$ (if $a_1/ \eta_2\le \rho^2$) or $\eta_2=\rho^3\eta_0$ (if $a_1/\eta_2>\rho^2$). Since $\rho>1,$ this process ends at some $j_0$ such that $\eta_{j_0}=a_1.$ The rest of the elements $\eta_j$
are built in the same way, as the original $a_k$ plus the necessary terms put in between two
consecutive $a_j$ to get \eqref{equ:lacunary}.
Let $J(j)=\{k:a_{j-1}<\eta_j\le a_j\}$, and $\omega_k=v_j$ if $k\in J(j)$. Then
\begin{equation*}
 v_j(\P_{a_{j+1}} f(x)-\P_{a_{j} } f(x))=\sum_{k\in J(j) }\omega_k(\P_{a_{k+1} } f(x)-\P_{a_{k} } f(x)).
\end{equation*}
If $M=(M_1, M_2)$ is the number such that $\eta_{M_2}=a_{N_2}$ and $\eta_{M_1-1}=a_{N_1-1}$, then we get
\begin{equation*}
 T_N f(x)=\sum_{j=N_1}^{N_2} v_j(\P_{a_{j+1}  } f(x)-\P_{a_{j} } f(x))=\sum_{k=M_1}^{M_2} \omega_k(\P_{a_{k+1}  } f(x)-\P_{a_{k} } f(x))= \tilde{T}_M f(x),
\end{equation*}
where $\tilde{T}_M$  is the operator defined in \eqref{Formu:FinSquareFun} related with sequences $\{\eta_k\}_{k\in \mathbb Z}$, $\{\omega_k\}_{k\in \mathbb Z}$ and $M=(M_1, M_2)$.
\end{proof}

This proposition allows us to assume in the rest of the article that the  lacunary sequences
 $\{a_j\}_{j\in \mathbb Z}$  satisfy \eqref{equ:lacunary} without saying it explicitly.

In order to prove Theorem \ref{Thm:PoissonLp} for the case of the classical laplacian we shall need a Cotlar's type inequality to control the operator $T^*$.
Namely, we shall prove the following theorem.
\begin{thm}\label{Thm:Maximalcontrol}
For each $q\in (1, +\infty),$ there exists a constant $C$ depending  on $\lambda, \norm{v}_{\ell^\infty(\mathbb Z)}$and $\rho$  such that,  for every $x\in \real_+$ and every $M\in \mathbb Z^+$,
\begin{equation*}
T_M^*f(x)\le C\left\{\M(T_{(-M, M)} f)(x)+\M_q f(x)\right\},
\end{equation*}
where $$T_M^{*}f(x)=\sup_{-M\le N_1<N_2\le M}\abs{ T_N f(x)}$$ and
\begin{equation}\label{Mq}
\M_qf(x)=\sup_{r>0} \left(\frac{1}{m_\lambda(I(x, r))}\int_{I(x, r)}\abs{f(y)}^qdy\right)^{1\over q},\quad 1< q<\infty.\end{equation}
\end{thm}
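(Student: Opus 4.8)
The plan is to establish a Cotlar-type inequality for the maximal differential transform $T^*_M$, following the classical pointwise-domination strategy for maximal truncations of Calder\'on--Zygmund operators, adapted to the space of homogeneous type $(\mathbb{R}_+, |\cdot|, dm_\lambda)$. The key idea is that for a fixed multi-index $N=(N_1,N_2)$ with $-M\le N_1 < N_2 \le M$, the partial sum $T_N f(x)$ differs from the full truncation $T_{(-M,M)} f(x)$ only by the ``outer'' blocks, and those outer blocks can be controlled by a smoothing argument.

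Concretely, the first step is to fix $x\in\mathbb{R}_+$ and a scale determined by the index $N$. The sequence $a_{N_1}$ plays the role of a radius: I would set $r \sim a_{N_1}$ and split $f = f\chi_{I(x, \kappa r)} + f\chi_{(I(x,\kappa r))^c} =: f_0 + f_\infty$ for a suitable dilation constant $\kappa$ depending on $\rho$. The ``local'' part $f_0$ is handled by the $L^q$ boundedness of $T_N$ (uniform in $N$, from Theorem~\ref{Thm:BMO}) together with H\"older's inequality, yielding a bound by $\M_q f(x)$; here the lacunarity \eqref{equ:lacunary} is what ties the truncation index $N_1$ to a single geometric scale so that the ball $I(x,\kappa r)$ is the right one. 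The ``far'' part $f_\infty$ requires showing that $T_N f_\infty(x)$ is essentially independent of the inner cutoff and comparable to $T_{(-M,M)} f_\infty(x)$, up to error terms controlled by the size estimate in Proposition~\ref{Thm:KernelEst} $i)$.

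The heart of the argument is the standard comparison: for $z$ in a small ball around $x$, one compares $T_N f_\infty(x)$ with the average of $T_{(-M,M)} f$ over that ball, using the smoothness estimate of the kernel (Proposition~\ref{Thm:KernelEst} $ii)$) to bound the difference. Integrating over $z\in I(x, r)$ and using that $K_N - K_{(-M,M)}$ is supported (in effect) away from the diagonal when the index blocks differ, the cancellation of telescoping sums over the lacunary blocks gives geometric decay: the sum $\sum_{k} \rho^{-k}$ or $\sum_k (a_{N_1}/|x-y|)$-type terms converges because of \eqref{equ:lacunary}. This produces the bound by $\M(T_{(-M,M)}f)(x)$ plus a tail controlled again by $\M_q f(x)$. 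One then takes the supremum over all admissible $N$ to obtain the stated inequality, with the constant depending on $\lambda$, $\rho$, and $\norm{v}_{\ell^\infty(\mathbb{Z})}$.

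The main obstacle I anticipate is the telescoping/decay bookkeeping for the far part: because $T_N$ is a finite sum of \emph{differences} $\P_{a_{j+1}} - \P_{a_j}$ rather than a single truncated kernel, one cannot directly invoke the textbook maximal-truncation argument. Instead the decay must come from summing the kernel-difference bounds $|\P_{a_{j+1}}(x,y) - \P_{a_j}(x,y)| \lesssim \int_{a_j}^{a_{j+1}} |\partial_t \P_t(x,y)|\, dt$ over the lacunary blocks and exploiting that, for $y$ far from $x$, the integrand $\abs{\partial_t \P_t(x,y)}$ is controlled by Lemma~\ref{KerPoisson} $iii)$, so that the block sums form a convergent geometric-type series in the ratios $a_j/|x-y|$. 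Making this summation uniform in $M$ and independent of which blocks are ``inner'' versus ``outer'' — while correctly separating the contribution dominated by $\M(T_{(-M,M)}f)(x)$ from the genuinely new error dominated by $\M_q f(x)$ — is the delicate part, and it is precisely where the lower lacunarity bound $a_{j+1}/a_j \ge \rho$ is indispensable.
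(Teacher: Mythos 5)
Your overall architecture coincides with the paper's: reduce to one-parameter truncations (the paper uses $T_N f=T_{(N_1,M)}f-T_{(N_2+1,M)}f$, a reduction worth making explicit, since it guarantees that the comparison with $T_{(-M,M)}$ produces only a \emph{lower}-block error and no upper-block one), split $f$ on the ball $I(x,a_m)$ with $m=N_1$, compare with the average of $T_{(-M,M)}f$ over a slightly smaller ball via the smoothness estimate of Proposition \ref{Thm:KernelEst}~ii), and sum the lower blocks applied to the far part with geometric decay in $\rho$ coming from lacunarity --- this last mechanism, which you correctly isolate as the delicate point, is exactly the paper's Lemma \ref{cotlar}(ii), with decay $\rho^{-(k-m+1)}$ for $a_k\le|x-y|\le a_{k+1}$, $k\ge m$. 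However, one step of your plan fails as written: the local part. You claim $\abs{T_N f_0(x)}\lesssim \M_q f(x)$ ``by the $L^q$ boundedness of $T_N$ together with H\"older's inequality.'' Operator-norm boundedness cannot control the value of $T_N f_0$ at the single point $x$; it only controls averages of $\abs{T_N f_0}$ over balls. Nor can you integrate the generic size bound of Proposition \ref{Thm:KernelEst}~i) against $\abs{f_0}$, since $m_\lambda(I(x,|x-y|))^{-1}$ is not integrable across the diagonal. In the textbook Cotlar argument for maximal truncations this issue never arises, because the truncated kernel vanishes for $\abs{x-y}<\epsilon$; here $T_{(m,M)}f_0(x)\neq 0$, so a genuinely pointwise estimate near the diagonal is indispensable.

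That estimate is the paper's Lemma \ref{cotlar}(i): for $\abs{x-y}\le a_m$, by the mean value theorem, Lemma \ref{KerPoisson}~iii) and \eqref{equ:lacunary},
\begin{equation*}
\Bigl|\sum_{j=m}^{M} v_j\bigl(\P_{a_{j+1}}(x,y)-\P_{a_j}(x,y)\bigr)\Bigr|\le \frac{C_{v,\rho}}{m_\lambda(I(x,a_m))},
\end{equation*}
i.e.\ because the block sum starts at scale $a_m\sim r$, its kernel has \emph{no} singularity inside $I(x,a_m)$, whence $\abs{T_{(m,M)}f_0(x)}\le C\,\M f(x)\le C\,\M_q f(x)$ directly. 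The $L^q$/H\"older device you invoke belongs to a different term: when, for $z$ in the ball, you replace $T_{(m,M)}f_\infty(z)$ by $T_{(-M,M)}f(z)$, the resulting error contains $T_{(-M,M)}f_0(z)$, and it is this term, \emph{averaged over the ball}, that H\"older and the uniform $L^q$ bound of Theorem \ref{Thm:BMO} dominate by $\M_q f(x)$ (the paper's term $A_2$; the remaining errors are the smoothness term $A_3$ and the lower-block term $A_4$, both dominated by $\M f(x)$). With Lemma \ref{cotlar}(i) supplied and the $L^q$ step relocated to the averaged error term, your outline becomes the paper's proof.
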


In order to prove the above theorem, we shall need the following lemma.

\begin{lem}\label{cotlar}  Let  $\{a_j\}_{j\in \mathbb Z}$  a $\rho$-lacunary sequence and $\{v_j\}_{j\in \mathbb Z} \in \ell^\infty(\mathbb Z)$. Then
\begin{itemize}
\item[(i)] $\displaystyle \abs{\sum_{j=m}^{M}v_j \left(\P_{a_{j+1}}(x,y)-\P_{a_{j}}(x,y) \right) } \le { {C_{v, \rho}} \over m_\lambda(I(x,a_m))}, \quad \forall x\in \mathbb R_+,  \ |x-y|\le a_m,$
\

\item[(ii)] if $k\ge m$ and $x,y \in\mathbb{R}_+$ with $a_k\le |x-y|\le  a_{k+1}$, then \begin{align*}
\Big|\sum_{j=-M}^{m-1}v_j \left(\P_{a_{j+1}}(x,y)-\P_{a_{j}}(x,y)  \right) 	 \Big|\, \le C_{v, \rho}\frac1{m_\lambda(I(x,a_k))}\rho^{-(k-m+1)}.
\end{align*}

\end{itemize}

\end{lem}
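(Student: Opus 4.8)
The plan is to estimate the two tails of the kernel sum separately, exploiting the lacunarity condition \eqref{equ:lacunary} together with the pointwise Poisson bounds from Lemma \ref{KerPoisson}. For both parts I would rewrite each difference as an integral, $\P_{a_{j+1}}(x,y)-\P_{a_j}(x,y)=\int_{a_j}^{a_{j+1}}\partial_t\P_t(x,y)\,dt$, use $|v_j|\le\norm{v}_{\ell^\infty}$, and then collapse the sum over $j$ into a single integral over the range of $t$-values, applying Lemma \ref{KerPoisson} $iii)$ to bound $|\partial_t\P_t(x,y)|$. The point is that after summing we obtain an integral of the form $\int |\partial_t\P_t(x,y)|\,dt$ over an interval of $t$, and lacunarity controls how the bound scales with $m_\lambda(I(x,\cdot))$.

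For part (i), the sum runs over $j$ from $m$ to $M$, so after telescoping into integrals the combined range of $t$ is $[a_m,a_{M+1}]\subset[a_m,\infty)$, and we are in the regime $|x-y|\le a_m$. I would split into the two geometric cases exactly as in the proof of Proposition \ref{Thm:KernelEst}: when $x\le 2|x-y|$ use the first bound in Lemma \ref{KerPoisson} $iii)$ and when $x>2|x-y|$ use the second. In either case, since $|x-y|\le a_m$, the factor $(|x-y|^2+t^2)$ is comparable to $t^2$ for $t\ge a_m$, so
$$\int_{a_m}^{\infty}\frac{dt}{(|x-y|^2+t^2)^{\lambda+1}}\le C\int_{a_m}^\infty \frac{dt}{t^{2\lambda+2}}\le \frac{C}{a_m^{2\lambda+1}},$$
and similarly $\int_{a_m}^\infty \frac{dt}{(xy)^\lambda(|x-y|^2+t^2)}\le \frac{C}{(xy)^\lambda a_m}$. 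Both of these are bounded by $C/m_\lambda(I(x,a_m))$ by the two-case comparison $m_\lambda(I(x,a_m))\sim a_m^{2\lambda+1}$ or $\sim (xy)^\lambda a_m$, which gives (i).

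For part (ii), the sum runs over $j$ from $-M$ to $m-1$, so the relevant $t$-range is $(0,a_m]$, while now $a_k\le|x-y|\le a_{k+1}$ with $k\ge m$. Here $|x-y|$ is the large quantity: for $t\le a_m\le a_k\le|x-y|$ we have $(|x-y|^2+t^2)\sim|x-y|^2\sim a_k^2$, so each of the two bounds in Lemma \ref{KerPoisson} $iii)$ loses its $t$-dependence in the denominator and the integral over $(0,a_m]$ contributes a factor $a_m$ from $\int_0^{a_m}dt$. This produces, in the two cases, a bound of order $\frac{a_m}{a_k^{2\lambda+2}}\sim \frac{a_m}{a_k}\cdot\frac{1}{a_k^{2\lambda+1}}$ or $\frac{a_m}{(xy)^\lambda a_k^2}\sim\frac{a_m}{a_k}\cdot\frac{1}{(xy)^\lambda a_k}$, i.e.\ of order $\frac{1}{m_\lambda(I(x,a_k))}\cdot\frac{a_m}{a_k}$. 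The final step is to convert the ratio $a_m/a_k$ into the claimed geometric decay: since the sequence is $\rho$-lacunary, $a_k/a_m\ge\rho^{k-m}$, so $a_m/a_k\le\rho^{-(k-m)}\le C_\rho\,\rho^{-(k-m+1)}$, yielding the stated estimate.

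The main obstacle I anticipate is bookkeeping the two geometric cases ($x\le 2|x-y|$ versus $x>2|x-y|$) consistently across both parts so that each of the two alternative kernel bounds in Lemma \ref{KerPoisson} $iii)$ matches the correct comparability regime for $m_\lambda(I(x,\cdot))$; in particular, in part (ii) one must verify that the same case split applies with $|x-y|\sim a_k$ and that the factor $(xy)^\lambda$ is genuinely comparable to $m_\lambda(I(x,a_k))/a_k$ in the relevant case. Once the case analysis is set up, the decay in (ii) comes out for free from lacunarity, which is exactly the feature that makes the Cotlar-type inequality in Theorem \ref{Thm:Maximalcontrol} work.
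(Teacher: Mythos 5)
Your overall route is the same as the paper's: write each difference as $\int_{a_j}^{a_{j+1}}\partial_t\P_t(x,y)\,dt$ (the paper uses the mean value theorem, which is equivalent), bound $\partial_t\P_t$ by Lemma \ref{KerPoisson} $iii)$, and let lacunarity supply the geometric factors, including the conversion $a_m/a_k\le\rho\cdot\rho^{-(k-m+1)}$ in part (ii). Part (ii) of your argument is in fact sound as written: since $a_k\le|x-y|\le a_{k+1}$, your case $x>2|x-y|$ automatically forces $x>2a_k$, which is precisely what makes $m_\lambda(I(x,a_k))\sim x^{2\lambda}a_k\sim(xy)^\lambda a_k$, while in the complementary case $x\le 2|x-y|\le 2\rho^2a_k$ one gets $m_\lambda(I(x,a_k))\lesssim_\rho a_k^{2\lambda+1}$, matching your first bound.

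In part (i), however, the dichotomy you import from Proposition \ref{Thm:KernelEst} ($x\le 2|x-y|$ versus $x>2|x-y|$) is misaligned with the measure you must produce, and one subcase fails. The relevant radius is $a_m$, not $|x-y|$, and since only $|x-y|\le a_m$ is assumed, the regime $2|x-y|<x\ll a_m$ is possible. There $x\sim y$, and your case-2 computation yields the (true) upper bound $C/((xy)^\lambda a_m)\sim C/(x^{2\lambda}a_m)$; but $m_\lambda(I(x,a_m))\sim x^{2\lambda}a_m+a_m^{2\lambda+1}\sim a_m^{2\lambda+1}\gg x^{2\lambda}a_m$ in this regime, so your claimed comparability $m_\lambda(I(x,a_m))\sim(xy)^\lambda a_m$ is false and the bound you obtain is strictly weaker than the target $C/m_\lambda(I(x,a_m))$ --- the desired inequality does not follow from it. The repair is exactly the paper's choice of split: $x\le 2a_m$ versus $x> 2a_m$. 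When $x\le 2a_m$ one has $m_\lambda(I(x,a_m))\sim a_m^{2\lambda+1}$ and the first estimate of Lemma \ref{KerPoisson} $iii)$ gives $\int_{a_m}^{\infty}t^{-2\lambda-2}\,dt\sim a_m^{-2\lambda-1}$ no matter where $|x-y|\le a_m$ sits; when $x>2a_m\ge 2|x-y|$ one has $x\sim y$ and $m_\lambda(I(x,a_m))\sim x^{2\lambda}a_m$, and your second-bound computation closes that case. With this one change of bookkeeping your proof coincides with the paper's.
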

\begin{proof}(i). We consider the following two cases:

Case 1: $x\le 2a_m.$ In this case,
$$m_\lambda(I(x,a_m))\sim a_m^{2\lambda+1}.$$
By the mean value theorem and Lemma \ref{KerPoisson} $iii)$,  there exists  $a_j\le \xi_j\le a_{j+1}$ such that
\begin{align*}
&\abs{\sum_{j=m}^{M}v_j \left(\P_{a_{j+1}}(x,y)- \P_{a_{j}}(x,y) \right) }\le
  C\norm{v}_{l^\infty(\mathbb Z)} \sum_{j=m}^{M} (a_{j+1}-a_j)\abs{\partial_t \P_{t}(x,y)\big|_{t=\xi_j}}\\  &\le C_{v}  \sum_{j=m}^{M} (a_{j+1}-a_j) \abs{1\over (|x-y|^2+\xi_j^2)^{\lambda+1}} \le C_{v}  \sum_{j=m}^{M} (\rho^2-1) {1\over \abs{a_j
   }^{2\lambda+1}} \\ &  \le  C_{v, \rho} \frac1{a_m^{2\lambda+1}} \sum_{j=m}^{M}  \frac1{\rho^{(2{\lambda}+1)(j-m)} }  \le C_{v, \rho} {1\over m_\lambda(I(x,a_m))},
\end{align*}
where we have used $\displaystyle \rho\le {a_{j+1}\over a_j}\le \rho^2.$

Case 2: $x\ge 2a_m.$ In this case,
$$m_\lambda(I(x,a_m))\sim x^{2\lambda}a_m\quad \text{and} \quad x\sim y.$$
Also, by the mean value theorem and Lemma \ref{KerPoisson} $iii)$,  there exists  $a_j\le \xi_j\le a_{j+1}$ such that
\begin{align*}
&\abs{\sum_{j=m}^{M}v_j \left(\P_{a_{j+1}}(x,y)- \P_{a_{j}}(x,y) \right) }\le
  C\norm{v}_{l^\infty(\mathbb Z)} \sum_{j=m}^{M} (a_{j+1}-a_j)\abs{\partial_t \P_{t}(x,y)\big|_{t=\xi_j}}\\  &\le C_{v}  \sum_{j=m}^{M} (a_{j+1}-a_j) \abs{1\over (xy)^\lambda(|x-y|^2+\xi_j^2)} \le C_{v}  \sum_{j=m}^{M} (\rho^2-1) {1\over x^{2\lambda}\abs{a_j
   }} \\ &  \le  C_{v, \rho} \frac1{x^{2\lambda} a_m} \sum_{j=m}^{M}  \frac1{\rho^{j-m} }  \le C_{v, \rho} {1\over m_\lambda(I(x, a_m))},
\end{align*}
where we have used $\displaystyle \rho\le {a_{j+1}\over a_j}\le \rho^2.$

Now we shall prove (ii).  As in the proof of (i), we can get the desired estimation by considering the following two cases:

Case 1: $x\le 2a_{k+1}.$ In this case,
$$m_\lambda(I(x,a_k))\sim a_k^{2\lambda+1}.$$
By the mean value theorem and Lemma \ref{KerPoisson} $iii)$, there exists $a_j\le \xi_j\le a_{j+1}$ such that
\begin{align*}
& \abs{\sum_{j=-M}^{m-1}v_j \left(\P_{a_{j+1}}(x,y)- \P_{a_{j}}(x,y) \right) }\\&\le C \norm{v}_{\ell^\infty(\mathbb Z)}\sum_{j=-M}^{m-1}(a_{j+1}-a_j)\abs{\partial_t \P_t(x, y)\big|_{t=\xi_j}}\\&\le C_v(\rho^2-1)\sum_{j=-M}^{m-1} {a_j \over |x-y|^{{2\lambda+2}} }   \le C_{v, \rho} \sum_{j=-M}^{m-1} {\frac{ a_j }{a_k }}\cdot {1\over a_k^{2\lambda+1}}\\
&
\le C_{v, \rho}  {1\over m_\lambda(I(x, a_k))}\rho^{-(k-m+1)}.
\end{align*}

Case 2: $x\ge 2a_{k+1}\ge 2|x-y|.$ In this case,
$$m_\lambda(I(x,a_k))\sim x^{2\lambda}a_k\quad \text{and} \quad x\sim y.$$

By the mean value theorem and Lemma \ref{KerPoisson} $iii)$, there exists $a_j\le \xi_j\le a_{j+1}$ such that
\begin{align*}
& \abs{\sum_{j=-M}^{m-1}v_j \left(\P_{a_{j+1}}(x,y)- \P_{a_{j}}(x,y) \right) }\\
&\le C \norm{v}_{\ell^\infty(\mathbb Z)}\sum_{j=-M}^{m-1}(a_{j+1}-a_j)\abs{\partial_t \P_t(x, y)\big|_{t=\xi_j}}\\
&\le C_v(\rho^2-1)\sum_{j=-M}^{m-1} {a_j \over (xy)^{\lambda}(|x-y|^2+\xi_j^2 )}\le C_{v, \rho} \sum_{j=-M}^{m-1} {\frac{ a_j }{a_k }}\cdot {1\over x^{2\lambda}a_k}\\
&
\le C_{v, \rho}  {1\over m_\lambda(I(x, a_k))}\rho^{-(k-m+1)},
\end{align*}
where we have used that $k \ge m.$
\end{proof}

\begin{proof}[Proof of Theorem \ref{Thm:Maximalcontrol}]
 Observe that, for any $x_0\in \R_+$ and $N=(N_1, N_2),$
$$T_N f(x_0)=T_{(N_1, M)} f(x_0)-T_{(N_2+1, M)} f(x_0),$$
with $-M\le N_1<N_2\le  M.$
Then, it suffices to estimate $\abs{T_{m, M} f(x_0)}$ for $\abs{m}\le M$ with constants independent of $x_0,$ $m$ and $M.$ Denote $I_k=I(x_0, a_k)$ for each $k\in \mathbb Z$.
Let us split $f$ as
\begin{align*}
f &=f \chi_{I_m} +f \chi_{I_m^c} =:f_1+f_2.
\end{align*}
 Then,  we have
\begin{align*}
\abs{T_{(m,M)} f(x_0)}&\le \abs{T_{(m,M)} f_1(x_0)}+\abs{T_{(m,M)} f_2(x_0)}\\
&=: I+II.
\end{align*}
For $I$, by Lemma \ref{cotlar} (i) we have
\begin{align*}
I&=\abs{T_{(m,M)} f_1(x_0)}= \abs{\int_{\real_+} \sum_{j=m}^{M}v_j \left(\P_{a_{j+1}}(x_0,y)-\P_{a_{j}}(x_0,y) \right)  f_1(y)dm_\lambda(y)}
\\&  \le C_{v, \rho}  {1\over m_\lambda(I_m)} \int_{I_m}  \abs{f(y)}dy \le C_{v, \rho, \lambda}\M f(x_0).
\end{align*}
For part $II$,
\begin{align*}\label{equ:II}
II&=\abs{T_{(m,M)} f_2(x_0)}=\frac{1}{m_\lambda(I_{m-1})}\int_{I_{m-1}} \abs{T_{(m,M)} f_2(x_0)}dm_\lambda(z) \\
&\le \frac{1}{ m_\lambda(I_{m-1})}\int_{I_{m-1}}\abs{T_{(-M,M)} f(z)}dm_\lambda(z)\\
 &\quad+\frac{1}{ m_\lambda(I_{m-1})}\int_{I_{m-1}}\abs{T_{(-M,M)} f_1(z)}dm_\lambda(z)\\
&\quad +\frac{1}{ m_\lambda(I_{m-1})}\int_{I_{m-1}}\abs{T_{(m,M)} f_2(z)-T_{(m,M)} f_2(x_0)}dm_\lambda(z)\\
&\quad +\frac{1}{ m_\lambda(I_{m-1})}\int_{I_{m-1}}\abs{T_{(-M,m-1)} f_2(z)}dm_\lambda(z)\\
&=:A_1+A_2+A_3+A_4.
\end{align*}
(If $m=-M$, we understand that $A_4=0$.) It is clear that
\begin{equation*}
A_1\le  \M (T_{(-M,M)} f)(x_0).
\end{equation*}
For $A_2,$ by the uniform boundedness of $T_{N}$, we get
\begin{multline*}
A_2\le \left(\frac{1}{ m_\lambda(I_{m-1})}\int_{I_{m-1}}\abs{T_{(-M,M)} f_1(z)}^qdm_\lambda(z)\right)^{1/q}\le C\left(\frac{1}{ m_\lambda(I_{m-1})}\int_{\mathbb{R}_+}\abs{f_1(z)}^qdm_\lambda(z)\right)^{1/q}\\
=C\left(\frac{1}{ m_\lambda(I_{m-1})}\int_{I_{m}}\abs{f(z)}^qdm_\lambda(z)\right)^{1/q}\le C\left(\frac{1}{ m_\lambda(I_{m-1})}\int_{I_{m}}\abs{f(z)}^qdm_\lambda(z)\right)^{1/q}\le C\M_q f(x_0).
\end{multline*}
For the third term $A_3$, with $z\in I_{m-1}$, by the mean value theorem ({with} $\xi:=\theta x_0+(1-\theta)z$ for certain $\theta\in (0,1)$) and Proposition \ref{Thm:KernelEst} we have
\begin{align*}
&\abs{T_{(m,M)} f_2(z)-T_{(m,M)} f_2(x_0)}\\
&=\abs{\int_{I_{m}^c} K_{(m,M)}(z, y)f(y)dm_\lambda(y)-\int_{I_m^c}  K_{(m,M)}(x_0, y)f(y)dm_\lambda(y) }\\
&\le \int_{I_m^c} \abs{K_{(m,M)}(z, y)-K_{(m,M)}(x_0, y)}\abs{f(y)}dm_\lambda(y)\\
&=\sum_{j=1}^{+\infty}\int_{I_{2^jm} \setminus I_{2^{j-1}m}} \abs{K_{(m,M)}(z, y)-K_{(m,M)}(x_0, y)}\abs{f(y)}dm_\lambda(y)\\
&=\sum_{j=1}^{+\infty}\int_{I_{2^jm} \setminus I_{2^{j-1}m}}  \abs{\partial_\xi K_{(m,M)}(\xi, y)}|z-x_0|\abs{f(y)}dm_\lambda(y) \\
&\le C\sum_{j=1}^{+\infty}\int_{I_{2^jm} \setminus I_{2^{j-1}m}}  {\abs{z-x_0} \over  { m_\lambda(I(\xi, |\xi-y|))|\xi-y| }}\abs{f(y)}dm_\lambda(y)\\
&\le C\sum_{j=1}^{+\infty} 2^{-{j}+2}\frac{1}{m_\lambda(I_{2^{j}m})} \int_{I_{2^{j}m}}\abs{f(y)}dm_\lambda(y)\\
&\le C\M f (x_0),
\end{align*}
where $I_{2^jm}=I(x_0, 2^ja_m)$ for any $j\ge 1,$ and we have used that $|z-x_0|\le {a_m/ \rho}$ and $|y-\xi|\ge 2^{j-2}a_m$ when $y\in I_{2^jm} \setminus I_{2^{j-1}m}.$
Then,
\begin{align*}
A_3=\frac{1}{m_\lambda(I_{m-1})}\int_{I_{m-1}}\abs{ T_{(m,M)} f_2(z)- T_{(m,M)} f_2(x_0)}dm_\lambda(z) \le C\M f(x_0).
\end{align*}
For the latest one,  $A_4,$  we have
\begin{align*}
A_4&=\frac{1}{m_\lambda(I_{m-1})}\int_{I_{m-1}}\abs{ T_{(-M,m-1)} f_2(z)}dm_\lambda(z)\\
&\le \frac{1}{m_\lambda(I_{m-1})}\int_{I_{m-1}}\int_{I_m^c}\abs{ K_{(-M,m-1)}(z, y) f(y)}dm_\lambda(y)dm_\lambda(z).
\end{align*}
Then, we consider the inner integral appeared in the above inequalities first. Since $z\in I_{m-1}$,  $y\in I_m^c$ and the sequence $\{a_j\}_{j\in \mathbb Z}$ is $\rho$-lacunary sequence, we have  $\abs{z-y}\sim \abs{y-x_0}.$
From this and by Lemma \ref{cotlar} (ii), we get
\begin{align*}
&\int_{I_m^c}\abs{K_{(-M,m-1)}(z, y) f(y)}dm_\lambda(y)\\
   &=\sum_{k=m}^{+\infty}\int_{I_{k+1}\setminus I_{k}} \abs{\sum_{j=-M}^{m-1}v_j \left( \P_{a_{j+1}}(z, y)-\P_{a_{j}}(z, y)\right) f(y)}dm_\lambda(y)\\
&\le C_{\rho, v} \sum_{k=m}^{+\infty}\rho^{-(k-m+1)}\left(\frac1{m_\lambda(I(x,a_k))}\int_{I_{k+1}\setminus I_{k}} \abs{f(y)}dm_\lambda(y)\right)\\
 &\le C_{\rho, v, \lambda} \M f(x_0)\sum_{k=m}^{+\infty}\rho^{-(k-m+1)}\\
 &\le C_{\rho, v, \lambda}\M f(x_0).
\end{align*}
Hence,
$$A_4\le C\M f(x_0).$$
Combining the estimates above for $A_1, A_2, A_3$ and $A_4$, we get
$$II\le \M ( T_{(-M,M)} f)(x_0)+C \M_q f(x_0).$$
And then we have
$$ \abs{ T_{(m,M)} f(x_0)}\le C\left( \M (T_{(-M,M)} f)(x_0)+\M_q f(x_0)\right). $$
As the constants $C$ appeared above all only depend on $\norm{v}_{l^\infty(\mathbb Z)}$, $\rho$ and $\lambda$, we have  proved that
$$ T^{*}_M f(x_0)\le C\left\{\M( T_{(-M, M)} f)(x_0)+\M_q f(x_0)\right\}.$$ This complete the proof of the theorem.
\end{proof}

Now, we are in a position to prove Theorem \ref{Thm:PoissonLp}.
\begin{proof}[Proof of Theorem \ref{Thm:PoissonLp}] {Given $\omega \in A_p$, we choose $1<q<p $ such that $\omega \in A_{p/q}$.
Then it is well known that, the maximal operators $\M$ and $ \M_q$ are bounded on $L^p(\real_+, \omega dm_\lambda),$ see \cite{Duo}.}  On the other hand, since the operators $T_N$ are uniformly bounded in $L^p(\real_+, \omega dm_\lambda)$ with $\omega \in A_p$, we have
\begin{align*}
\norm{T_M^*f}_{L^p(\real_+, \omega dm_\lambda)}&\le C\left(\norm{\M (T_{(-M, M)} f)}_{L^p(\real_+, \omega dm_\lambda)}+\norm{\M_q f}_{L^p(\real_+, \omega dm_\lambda)}\right)\\&\le C\left(\norm{T_{(-M, M)} f}_{L^p(\real_+, \omega dm_\lambda)}+\norm{f}_{L^p(\real_+, \omega dm_\lambda)}\right)\le C\norm{f}_{L^p(\real_+, \omega dm_\lambda)}.
\end{align*}
Note that the constants $C$ appeared above do not depend on $M$. Consequently, letting $M$ increase to infinity, we get the proof of the $L^p$ boundedness of the maximal operator $T^*.$
This completes the proof of part $(a)$ of the theorem.

In order to prove $(b)$, we consider the $\ell^\infty(\mathbb Z^2)$-valued operator
$\mathcal{T}f(x) = \{  T_N f(x) \}_{N\in \mathbb Z^2}$. Since  $\|\mathcal{T}f(x) \|_{\ell^\infty(\mathbb Z^2)}= T^*f(x)$,   by using $(a)$ we know that the operator $\mathcal{T}$ is bounded from $L^p(\real_+, \omega dm_\lambda) $ into $L^p_{\ell^\infty(\mathbb Z^2)}(\mathbb{R}_+, \omega dm_\lambda) $, for every $1<p<\infty$ and $\omega \in A_p$. The kernel of the operator $\mathcal{T}$ is given by $\mathcal{K}(x) = \{ K_N(x)\} _{N\in \mathbb Z^2}$. Therefore, by the vector valued Calder\'on-Zygmund theory, the operator $\mathcal{T}$ is bounded from $L^1(\mathbb{R}_+, \omega dm_\lambda)$ into weak-$L^1_{\ell^\infty(\mathbb Z^2)}(\mathbb{R}_+, \omega dm_\lambda)$ for $\omega \in A_1$. Hence, as $\|\mathcal{T}f(x) \|_{\ell^\infty(\mathbb Z^2)}= T^*f(x)$, we get the proof of  $(b)$.

For $(c)$, we shall prove that, if $f\in BMO(\mathbb R_+, dm_\lambda)$ and there exists $x_0\in \mathbb R_+$ such that $T^*f(x_0)<\infty,$ then $T^*f(x)<\infty$ for $a.e.$ $x\in \real_+.$
Set $I=I(x_0, 4\abs{x-x_0})$ with $x\neq x_0$. And we decompose $f$ to be
\begin{equation*}
f=(f-f_I)\chi_I+(f-f_I)\chi_{I^c}+f_I=:f_1+f_2+f_3.
\end{equation*}
Note that $T^*$ is $L^p$ bounded for any $1<p<\infty.$ Then $T^*f_1(x)<\infty$, because $f_1\in L^p(\mathbb R_+, dm_\lambda)$, for any $1<p<\infty.$ And  $T^* f_3=0$, since $\P_{a_j}f_3=f_3$ for any $j\in \mathbb Z.$
On the other hand by the smoothness properties of the kernel , we have
\begin{align*}
&\Big|T_N f_2(x)-T_N f_2(x_0)\Big|=\Big| \int_{I^c}\left(K_N(x, y) - K_N (x_0, y)\right)f_2(y)dm_\lambda(y)\Big|\\
&\le C \int_{I^c}  \frac{\abs{x-x_0}}{m_\lambda(I(x_0, |y-x_0|))\abs{y-x_0}}\abs{f(y)-f_I} dm_\lambda(y)\\
&\le C \sum_{k=1}^{+\infty}{ |x-x_0|} \int_{2^{k} I\setminus 2^{k-1}I}  {\abs{f(y)-f_I}\over {m_\lambda(I(x_0, |y-x_0|))\abs{y-x_0}}}dm_\lambda(y)\\
&\le C \sum_{k=1}^{+\infty}{ |x-x_0|\over m_\lambda(I(x_0, 2^{k+1}|x-x_0|))2^{k+1}|x-x_0|} \int_{2^kI}  {\abs{f(y)-f_I}} dm_\lambda(y) \\ &\le C \sum_{k=1}^{+\infty}2^{-(k+1)}{ 1\over m_\lambda(I(x_0, 2^{k+1}|x-x_0|))} \int_{2^{k}I} \Big(  {\abs{f(y)-f_{2^kI}}}+\sum_{l=1}^{k}\abs{f_{2^lI}-f_{2^{l-1}I}}\Big)dm_\lambda(y)\\
&\le C\sum_{k=1}^{+\infty}2^{-(k+1)}{ 1\over m_\lambda(I(x_0, 2^{k+1}|x-x_0|))} \int_{2^{k}I} \Big(  {\abs{f(y)-f_{2^kI}}}+2k\norm{f}_{BMO(\real_+, dm_\lambda)}\Big)dm_\lambda(y)\\
&\le C\sum_{k=1}^{+\infty}2^{-(k+1)}{(1+2k)\norm{f}_{BMO(\real_+, dm_\lambda)}}\\
&\le  C\norm{f}_{BMO(\real_+,dm_\lambda)},
 \end{align*}
where $2^k I=I(x_0, 2^{k}\cdot 4|x-x_0|)$ for any $k\in \mathbb N.$
 Hence
  \begin{align*}
\norm{T_N f_2(x)-T_N f_2(x_0)}_{l^\infty(\mathbb Z^2)} \le C\norm{f}_{BMO(\mathbb R_+, dm_\lambda)},
\end{align*}
and therefore $ T^*f(x) = \norm{T_N f(x)}_{l^\infty(\mathbb Z^2)}  \le C < \infty.$

Finally,  the  estimate (\ref{sharp}) can be  proved in a parallel way to the proof of part $(d)$ of Theorem \ref{Thm:BMO}, since $\mathcal{T}1(x) = \{T_N1(x)\}= 0$(also see \cite{MTX}).
\end{proof}

From Theorem \ref{Thm:PoissonLp}, we can get the following consequence.

\begin{thm}\label{Thm:ae}\begin{enumerate}[(a)]
    \item If $1<p<\infty$ and $\omega\in A_p$, then $T_N f$ converges {\it{a.e.}} and  in $L^p(\mathbb R_+, \omega dm_\lambda)$ norms for all $f\in L^p(\mathbb R_+, \omega dm_\lambda)$ as $N=(N_1,N_2)$ tends to $(-\infty, +\infty).$
   \item If $p=1$  and $\omega\in A_1$, then $T_N f$ converges {\it{a.e.}} and in measure for all $f\in L^1(\mathbb R_+, \omega dm_\lambda)$ as $N=(N_1,N_2)$ tends to $(-\infty, +\infty).$
\end{enumerate}
\end{thm}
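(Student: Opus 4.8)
The plan is to deduce both convergence statements from the maximal estimates already established in Theorem \ref{Thm:PoissonLp}, via the classical scheme ``maximal control $+$ convergence on a dense subclass''. The bounds $\norm{T^*f}_{L^p(\real_+,\omega dm_\lambda)}\le C\norm{f}_{L^p(\real_+,\omega dm_\lambda)}$ for $1<p<\infty$, $\omega\in A_p$, and the weak-type $(1,1)$ inequality for $\omega\in A_1$, are exactly what let one promote convergence from a dense set to the whole space. Since $C_c^\infty(\real_+)$ is dense in $L^p(\real_+,\omega dm_\lambda)$ for every $A_p$ weight, I would take $\mathcal D=C_c^\infty(\real_+)$ as the dense class.

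First I would establish pointwise convergence of $T_N g$ for $g\in\mathcal D$, by showing that the bilateral series $\sum_{j\in\ent}v_j(\P_{a_{j+1}}g-\P_{a_j}g)$ converges absolutely at every point, controlling the two tails separately through $\P_{a_{j+1}}g(x)-\P_{a_j}g(x)=\int_{a_j}^{a_{j+1}}\partial_t\P_t g(x)\,dt$. For the tail $j\to+\infty$, where $a_j\to\infty$ by lacunarity, Lemma \ref{KerPoisson} $iii)$ gives $\abs{\partial_t\P_t g(x)}\le C t^{-(2\lambda+2)}\norm{g}_{L^1(\real_+,dm_\lambda)}$ for all $t>0$, so that $\sum_{j\ge J}\int_{a_j}^{a_{j+1}}\abs{\partial_t\P_t g(x)}\,dt\le C\norm{g}_{L^1}\int_{a_J}^\infty t^{-(2\lambda+2)}\,dt=C'\norm{g}_{L^1}a_J^{-(2\lambda+1)}\to 0$. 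For the tail $j\to-\infty$, where $a_j\to 0$, I would use that for $g\in C_c^\infty(\real_+)$ the map $t\mapsto\partial_t\P_t g(x)$ extends continuously to $t=0^+$, hence is bounded near the origin, so $\sum_{j\le -J}\int_{a_j}^{a_{j+1}}\abs{\partial_t\P_t g(x)}\,dt\le C(g,x)\,a_{-J+1}\to 0$. Thus $T_N g(x)$ converges for every $x$ as $(N_1,N_2)\to(-\infty,+\infty)$; I call the limit $Tg(x)$, and the same bounds show $T_N g\to Tg$ in $L^1(\real_+,\omega dm_\lambda)$.

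With convergence on $\mathcal D$ in hand, a.e.\ convergence for general $f$ follows by the oscillation argument. Setting $\mathcal O f(x)=\limsup\abs{T_Nf(x)-T_{N'}f(x)}$ (limsup as both indices run to the two-sided infinity), $\mathcal O$ is subadditive, satisfies $\mathcal O f\le 2T^*f$, and $\mathcal O g=0$ for $g\in\mathcal D$; hence $\mathcal O f=\mathcal O(f-g)\le 2T^*(f-g)$ for every $g\in\mathcal D$. The maximal bounds then give $\omega(\{\mathcal O f>\sigma\})\le C\sigma^{-p}\norm{f-g}^p_{L^p(\omega dm_\lambda)}$ for $1<p<\infty$, and $\omega(\{\mathcal O f>\sigma\})\le C\sigma^{-1}\norm{f-g}_{L^1(\omega dm_\lambda)}$ for $p=1$; taking the infimum over $g\in\mathcal D$ forces $\mathcal O f=0$ a.e., i.e.\ $T_N f$ converges a.e. For the norm convergence in $(a)$ I would invoke dominated convergence: $\abs{T_Nf-Tf}^p\le (2T^*f)^p\in L^1(\omega dm_\lambda)$ and $T_Nf\to Tf$ a.e., whence $\norm{T_Nf-Tf}_{L^p(\omega dm_\lambda)}\to 0$. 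For convergence in measure in $(b)$ I would use a three-term splitting $T_Nf-T_{N'}f=T_N(f-g)+(T_Ng-T_{N'}g)+T_{N'}(g-f)$, bounding the outer terms by the weak-$(1,1)$ estimate for $T^*$ and the middle term by the $L^1$ convergence of $T_Ng$, to conclude that $\{T_Nf\}$ is Cauchy in measure.

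The main obstacle I anticipate is the dense-class step, and specifically the need to quantify the two tails strongly enough to obtain genuine convergence rather than mere uniform boundedness. The negative tail is the more delicate one, since it requires a rate for $\P_{a_j}g\to g$ as $a_j\to0$; I would handle it via the regularity of $\partial_t\P_t g$ at $t=0^+$ for $g\in C_c^\infty(\real_+)$ rather than through weighted semigroup estimates, because the Poisson semigroup need not be a contraction on $L^p(\real_+,\omega dm_\lambda)$ for a general $A_p$ weight. Keeping the dense-class convergence at the pointwise level, and transferring it to the norm and measure statements only through the already-available control $T^*f\in L^p(\real_+,\omega dm_\lambda)$, is precisely what lets me bypass weighted smoothing estimates.
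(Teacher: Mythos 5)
Your global scheme is the same as the paper's: prove convergence of $T_N g$ at every point for $g$ in a dense class of test functions by estimating the two tails of the bilateral series separately, then transfer to general $f$ through the maximal bounds of Theorem \ref{Thm:PoissonLp} (your oscillation operator $\mathcal O$ is just a more explicit writing of the paper's density step, and your dominated-convergence and three-term-splitting arguments for norm and measure convergence are exactly what the paper invokes). Your treatment of the $j\to+\infty$ tail is also identical: the bound $\abs{\partial_t\P_t(x,y)}\le C(|x-y|^2+t^2)^{-(\lambda+1)}\le Ct^{-(2\lambda+2)}$ from Lemma \ref{KerPoisson} $iii)$ integrates to the rate $a_L^{-(2\lambda+1)}$, matching the paper's computation.

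The one place where you genuinely diverge is the delicate $j\to-\infty$ tail, and there your argument has a hole as written. You assert that for $g\in C_c^\infty(\real_+)$ the function $t\mapsto \partial_t\P_t g(x)$ extends continuously (in particular boundedly) to $t=0^+$. That claim is true, but it does \emph{not} follow from the kernel estimates of Lemma \ref{KerPoisson}, which are the only tools you cite: integrating the bound $\abs{\partial_t\P_t(x,y)}\le C(|x-y|^2+t^2)^{-(\lambda+1)}$ against $dm_\lambda(y)$ near $y=x$ produces a blow-up of order $t^{-(2\lambda+1)}$, so the naive estimate gives nothing as $t\to 0^+$. To make your route rigorous you would need a separate argument, e.g.\ the Hankel representation $\partial_t\P_t g(x)=-\int_0^\infty \varphi_y(x)\,\abs{y}e^{-t\abs{y}}H_\lambda g(y)\,dm_\lambda(y)$ together with the uniform bound $\abs{\varphi_y(x)}\le C_\lambda$ and the rapid decay of $H_\lambda g$ for $g\in C_c^\infty(\real_+)$ (via $\abs{y}^{2k}H_\lambda g=H_\lambda(\Delta_\lambda^k g)$), which gives $\sup_{t>0,\,x}\abs{\partial_t\P_t g(x)}\le C(g)<\infty$. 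The paper avoids this regularity question entirely by exploiting cancellation instead: since $\int_{\real_+}\bigl(\P_{a_{j+1}}(x,y)-\P_{a_j}(x,y)\bigr)dm_\lambda(y)=0$ (i.e.\ $\P_t 1=1$), it subtracts $\varphi(x)$ inside the integral, splits the $y$-integration at $\sqrt{a_j}$, and uses the Lipschitz bound $\abs{\varphi(y)-\varphi(x)}\le \norm{\nabla\varphi}_\infty\abs{x-y}$ on the near piece and $\norm{\varphi}_\infty$ on the far piece, obtaining the explicit rate $a_{-L}^{1/2}\to 0$ using only Lemma \ref{KerPoisson}. Either fix works; the paper's cancellation argument is self-contained with the estimates already at hand, while your spectral route, once the decay of $H_\lambda g$ is actually proved, has the mild advantage of giving a bound uniform in $x$ (which, incidentally, is what your side remark about $L^1$ convergence of $T_Ng$ and your middle term in the in-measure argument silently require).
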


\begin{proof}
First, we shall see that if $\varphi$ is a test function, then $T_N \varphi(x)$ converges for all $x\in \real_+$.  In order to prove this, it is enough to see that for any  $(L,M)$ with $0<L<M$,  the  series \begin{equation*} A= \sum_{j=L}^M v_j ( \P_{a_{j+1}} \varphi(x) - \P_{a_j} \varphi(x))\quad   \hbox{  and  }\quad  B= \sum_{j=-M}^{-L} v_j ( \P_{a_{j+1}} \varphi(x) - \P_{a_j} \varphi(x))
 \end{equation*}
converge to zero, when $L, M\rightarrow +\infty$.
By the mean value theorem and the $\rho$-lacunarity  of the sequence $\{a_j\}_{j\in \mathbb Z}$, we have
\begin{align*}
|A|   & \le   C_{ v}  { \int_{\real_+}\sum_{j=L}^{M}   \abs{\P_{a_{j+1}}(x,y)-\P_{a_{j}}(x,y)} |\varphi(y)|dm_\lambda(y)}\\
&\le C_{ v, \rho, \lambda}   \int_{\real_+} \sum_{j=L}^{M}  \frac{C}{a^{2\lambda+1}_j} |\varphi(y)|dm_\lambda(y) \le  C_{ v, \rho, \lambda}\left({1\over a_L^{2\lambda+1}}\sum_{j=L}^{M} \frac{a_L^{2\lambda+1}}{a_j^{2\lambda+1}}\right) \int_{\real_+} |\varphi(y)|dm_\lambda(y)\\
&\le C_{ v, \rho, \lambda} {\rho^{{2\lambda+1}}\over {\rho^{{2\lambda+1}}-1}}\|\varphi\|_{L^1(\real_+, dm_\lambda)}{1\over a_L^{{2\lambda+1}}} \longrightarrow 0, \quad \hbox{as}\ { L,M \to +\infty}.
\end{align*}
On the other hand,  as the integral of the kernels are zero, we can write
\begin{align*}
B&= \int_{\real_+} \sum_{j=-M}^{-L}v_j \left(\P_{a_{j+1}}(x,y)- \P_{a_{j}}(x,y) \right)( \varphi(y)- \varphi(x))dm_\lambda(y)\\
&= \sum_{j=-M}^{-L}\Big(\int_{0}^{\sqrt a_j}v_j \left(\P_{a_{j+1}}(x,y)- \P_{a_{j}}(x,y) \right)( \varphi(y)- \varphi(x))dm_\lambda(y)\\
&\quad \quad +\int^{+\infty}_{\sqrt a_j}v_j \left(\P_{a_{j+1}}(x,y)- \P_{a_{j}}(x,y) \right)( \varphi(y)- \varphi(x))dm_\lambda(y)\Big)\\
&=:B_1+B_2.
\end{align*}
And, then proceeding as in the case $A$, and by using the fact that $\varphi$ is a test function,  we have
\begin{align*}
|B_1|&\le C_{\lambda, \rho} \norm{v}_{\ell^\infty(\mathbb Z)} \sum_{j=-M}^{-L} \int_{0}^{\sqrt a_j} { a_j\over (|x-y|^2+a_j^2)^{\lambda+1} }\abs{ \varphi(y)- \varphi(x)}dm_\lambda(y)\\
&\le C_{\lambda, \rho} \norm{\nabla\varphi}_{L^\infty(\real_+, dm_\lambda)}\sum_{j=-M}^{-L} \int_{0}^{\sqrt a_j} { a_j|x-y|\over (|x-y|^2+a_j^2)^{\lambda+1} }dm_\lambda(y)\\
&\le C_{ \lambda, \rho, \varphi} a_{-L}^{1/2}\sum_{j=-M}^{-L} {a_j^{1/2}\over  a_{-L}^{1/2}}\\
&\le C_{ \lambda, \rho, \varphi} {\sqrt \rho\over \sqrt \rho-1} a_{-L}^{1/2}\longrightarrow 0, \quad \hbox{as}\ { L,M \to +\infty}.
\end{align*}
On the other hand,
\begin{align*}
|B_2|&\le C_{\lambda, \rho} \norm{v}_{\ell^\infty(\mathbb Z)} \sum_{j=-M}^{-L} \int_{\sqrt a_j}^{+\infty} { a_j\over (|x-y|^2+a_j^2)^{\lambda+1} }\abs{ \varphi(y)- \varphi(x)}dm_\lambda(y)\\
&\le C_{\lambda, \rho} \norm{\varphi}_{L^\infty(\real_+, dm_\lambda)}\sum_{j=-M}^{-L} \int_{\sqrt a_j}^{+\infty} { a_j \over (|x-y|^2+a_j^2)^{\lambda+1} }dm_\lambda(y)\\
&\le C_{ \lambda, \rho, \varphi} a_{-L}^{1/2}\sum_{j=-M}^{-L} {a_j^{1/2}\over  a_{-L}^{1/2}}\\
&\le C_{ \lambda, \rho, \varphi} {\sqrt \rho\over \sqrt \rho-1} a_{-L}^{1/2}\longrightarrow 0, \quad \hbox{as}\ { L,M \to +\infty}.
\end{align*}

As the set of test functions is dense in $L^p(\mathbb{R}_+, dm_\lambda)$, by Theorem \ref{Thm:PoissonLp} we get the $a.e.$ convergence for any function in $L^p(\mathbb{R}_+, dm_\lambda)$. Analogously, since $L^p(\mathbb{R}_+) \cap L^p(\mathbb{R}_+, dm_\lambda)) $ is dense in $L^p(\mathbb{R}_+, \omega dm_\lambda)$, we get the $a.e.$ convergence for functions in $L^p(\mathbb{R}_+, \omega dm_\lambda) $ with $1\le p<\infty$. By using the dominated convergence theorem,  we can prove the convergence in $L^p(\mathbb{R}_+, \omega dm_\lambda)$ norm for $1<p<\infty$, and also in measure.
\end{proof}

\section{Local growth of the Maximal operator $T^*$}\label{Sec:local-growth}

In this section, we give the proof of Theorem \ref{Thm:GrothLinfinity}.

\begin{proof}[Proof of Theorem \ref{Thm:GrothLinfinity}]
We will prove it only in the case $1<p<\infty$. For the cases $p=1$ and  $p=\infty$, the proof follows by introducing the obvious changes.  Since $2r<1,$ we know that $\tilde I\backslash \tilde I_{2r}\neq \emptyset.$ Let $f=f_1+f_2$, where $f_1=f\chi_{\tilde I_{2r}}$ and $f_2=f(x)\chi_{\tilde I\backslash \tilde I_{2r}}$. Then
$$\abs{T^{*} f(x)}\le \abs{T^{*} f_1(x)}+\abs{T^{*} f_2(x)}.$$
By Theorem \ref{Thm:PoissonLp}, we have
\begin{multline*}
\frac{1}{m_\lambda(\tilde I_r)} \int_{\tilde I_r} \abs{T^{*} f_1 (x)} dm_\lambda(x)\le \left(\frac{1}{m_\lambda(\tilde I_r)} \int_{\tilde I_r} \abs{T^{*} f_1 (x)}^2 dm_\lambda(x)\right)^{1/2}
\\\le C \left(\frac{1}{m_\lambda(\tilde I_r)} \int_{\mathbb{R}_+}\abs{ f_1 (x)}^2 dm_\lambda(x)\right)^{1/2}\le C\norm{f}_{L^\infty(\mathbb{R}_+, dm_\lambda)}.
\end{multline*}

We should note that, for any $j\in \mathbb Z,$ by Lemma \ref{KerPoisson},
 \begin{multline}\label{equ:constant}
 \int_{\real_+}\abs{\P_{a_{j+1}}(x,y) -\P_{a_{j}}(x,y) }~ dm_\lambda(y)=\int_{\real_+}\abs{\int_{a_j}^{a_{j+1}}\partial_t\P_{t}(x,y) dt }~ dm_\lambda(y) \\
 \le C_\lambda\int_{\real_+} {a_{j+1}-a_j\over {(|x-y|^2+a_j^2)^{\lambda+1}}} dm_\lambda(y)  =C_{\lambda, \rho}.
 \end{multline}
And for $1< p < \infty$ and any $N=(N_1, N_2)$, by H\"older's inequality, \eqref{equ:constant}, Fubini's Theorem and Lemma  \ref{KerPoisson} we have
\begin{align*}
&\abs{\sum_{j=N_1}^{N_2}v_j\left(\P_{a_{j+1}}f_2(x)- \P_{a_j}f_2(x)\right)}\nonumber\\
&\le C\sum_{j=N_1}^{N_2} \abs{v_j \int_{\real_+}\left(\P_{a_{j+1}}(x,y) -\P_{a_{j}}(x,y)\right)  f_2(y)~ dm_\lambda(y)}\nonumber\\
&\le C\norm{v}_{l^p(\mathbb Z)}\left( \sum_{j=N_1}^{N_2}\left(\int_{\real_+}\abs{\P_{a_{j+1}}(x,y) -\P_{a_{j}}(x,y)} \abs{f_2(y)}~ dm_\lambda(y)\right)^{p'}\right)^{1/p'} \nonumber \\
&\le C_v\Big(\sum_{j=N_1}^{N_2} \Big\{\int_{\real_+}\abs{\P_{a_{j+1}}(x,y) -\P_{a_{j}}(x,y) } \abs{f_2(y)}^{p'}~ dm_\lambda(y)\Big\}  \\
 & \quad \quad  \times  \Big\{\int_{\real_+}\abs{\P_{a_{j+1}}(x,y) -\P_{a_{j}}(x,y) }~ dm_\lambda(y)  \Big\}^{p'/p} \Big)^{1/p'}
\nonumber \\
&\le C_{v,p, \lambda, \rho}\left(\sum_{j=N_1}^{N_2} \int_{\real_+}\abs{\P_{a_{j+1}}(x,y) -\P_{a_{j}}(x,y) } \abs{f_2(y)}^{p'}~ dm_\lambda(y)   \right)^{1/p'}
\nonumber \\
&\le C_{v,p, \lambda, \rho} \left(\int_{\real_+}\sum_{j=-\infty}^{+\infty} \abs{\P_{a_{j+1}}(x,y) -\P_{a_{j}}(x,y)} \abs{f_2(y)}^{p'}~ dm_\lambda(y)  \right)^{1/p'}
\nonumber \\ \nonumber
&\le C_{v,p, \lambda, \rho}\left(\int_{\real_+} \frac1{|x-y|^{2\lambda+1}} \abs{f_2(y)}^{p'}~ dm_\lambda(y)  \right)^{1/p'}.
\end{align*}
For $y\in \tilde I\backslash \tilde I_{2r}$ and $x\in \tilde I_r$,  we have  $r\le |x-y|\le 2$. Then,  we get
\begin{align*}
&\frac{1}{m_\lambda(\tilde I_r)} \int_{\tilde I_r} \abs{T^{*} f_2(x)}dm_\lambda(x) \\&\le
C_{v,p, \lambda, \rho}\frac{1}{m_\lambda(\tilde I_r)} \int_{\tilde I_r} \left(\int_{\real_+} \frac1{|x-y|^n} \abs{f_2(y)}^{p'}~ dm_\lambda(y)  \right)^{1/p'}dm_\lambda(x) \\
&\le C_{v,p, \lambda, \rho}\frac{\norm{f}_{L^\infty(\real_+, dm_\lambda)}}{m_\lambda(\tilde I_r)} \int_{\tilde I_r} \left(\int_{r\le |x-y| \le 2} \frac1{|x-y|^{2\lambda+1}} ~ dm_\lambda(y) \right)^{1/p'}dm_\lambda(x) \\
 &\sim \Big(\log\frac{2}{r}\Big)^{1/p'}\norm{f}_{L^\infty(\real_+, dm_\lambda)}.
\end{align*}
Hence, $$\frac{1}{m_\lambda(\tilde I_r)} \int_{\tilde I_r} \abs{T^{*} f(x)}dm_\lambda\le C\left(1+\Big(\log\frac{2}{r}\Big)^{1/p'}\right)\norm{f}_{L^\infty(\real_+, dm_\lambda)}  \le C\Big(\log\frac{2}{r}\Big)^{1/p'}\norm{f}_{L^\infty(\real_+, dm_\lambda)}.$$
\end{proof}

\vspace{1em}

\vspace{3em}

\end{document}